\newtheorem{theorem}{Theorem}[section]
\newtheorem{definition}[theorem]{Definition}
\newtheorem{remark}[theorem]{Remark}
\newtheorem{corollary}[theorem]{Corollary}
\begin{document}
	\title{\textbf{Existence of solutions for $n^\mathrm{th}$-order nonlinear differential boundary value problems by means of new fixed point theorems}}
	\author{Alberto Cabada and  Lorena Saavedra\\Instituto de Matem\'aticas,\\ Facultade de Matem\'aticas,\\Universidade de Santiago de Compostela,\\ Santiago de Compostela, Galicia,
		Spain\\
		alberto.cabada@usc.es, lorena.saavedra@usc.es
	} 
	\maketitle
	\begin{abstract}
	This paper is devoted to prove the existence of one or multiple solutions of a wide range of nonlinear differential boundary value problems.
	
	To this end, we obtain some new fixed point theorems for a class of integral operators. We follow the well-known Krasnoselski\u{\i}'s fixed point Theorem together with two fixed point results of  Leggett-Williams type.	
	After obtaining a general existence result for a one parameter family of nonlinear differential equations, are proved, as particular cases, existence results for second and fourth order nonlinear boundary value problems.
		
	\end{abstract}
\noindent \textbf{2010 Mathematics Subject Classification:} 34B15, 34B27, 34B18, 47H10 \\

\noindent \textbf{Keywords}: Green's functions, fixed point theorems, nonlinear boundary value problems
	\section{Introduction}
	
	The use of different kind of fixed point theorems has been shown as a very useful tool to obtain the existence of solutions of nonlinear boundary value problems.

For instance, in \cite{CaCiIn} it is proved that the following second order system
\[\begin{cases}
u_1''(t)+\lambda_1\,f_1(u_1(t),u_2(t))=0\,,&t\in(0,1)\,,\\
u_2''(t)+\lambda_2\,f_2(u_1(t),u_2(t))=0\,,&t\in(0,1)\,,\\
u_1'(0)=u_1(1)+u_1'(1)=0\,,\\
u_2'(0)=u_2(1)+\varepsilon\,u_2(\eta)=0\,,&\eta\,,\ \varepsilon\in(0,1)\,,
\end{cases}\]
has a solution for every $\lambda_1$, $\lambda_2>0$, by applying some previously obtained fixed point results on a related system of integral operators.

In \cite{FrInPe}, under suitable conditions for the functions $f$, $g$ and $\lambda\in \mathbb{R}$, it is proved the existence of at least two strictly positive solutions of the second order boundary value problem
\[\begin{cases}
u''(t)+\lambda\,g(t)\,f(u(t))=0\,,&t\in(0,1)\,,\\
u'(0)=\sigma\,u'(1)+u(\eta)=0\,,&\eta\in[0,1]\,,
\end{cases}\]	
which models the behavior of a thermostat. Again, the existence result is obtained by applying a fixed point theorem.

In \cite{CaPrSaTe}, it is studied a fourth order boundary value problem coupled with the cantilever beam boundary conditions:
\[\begin{cases}
u^{(4)}(t)=f(t,u(t))\,,&t\in(0,1)\,,\\
u(0)=u'(0)=u''(1)=u'''(1)=0\,.
\end{cases}\]

In such a work, two combined techniques on the existence of solutions are used: the critical and the fixed point theory.

In \cite{AnAvHen}, it is obtained a result in the line of the Leggett-Williams fixed point theorem (see \cite{LeWi}), that guarantees the existence of at least  a positive  fixed point on different sets defined by means of suitable functionals. As a direct application of this result, it is proved the existence of solution of the following second order nonlinear differential boundary value problem:
\[\begin{cases}
u''(t)+f(u(t))=0\,,& t\in(0,1)\,,\\
u(0)=u'(1)=0\,.&
\end{cases}\]

Moreover, in \cite{AvAnHen}, by means of a new fixed point theorem proved in that reference, a different existence result for this problem has been obtained.

In \cite{ZhSu} it is obtained an extension of the Leggett-Williams fixed point theorem given in \cite{LeWi} and, as an application, it is obtained a result which ensures the existence of multiple solutions of the following third order boundary value problem:
\[\begin{cases}
u'''(t)+f(t,u(t),u'(t),u''(t))=0\,,&t\in(0,1)\,,\\
u(0)=\sum_{i=1}^{m-2}k_i\,u(\xi_i)\,,\  u'(0)=u'(1)=0\,,
\end{cases}\]
where $0<\xi_1<\xi_2<\cdots<\xi_{m-2}<1$, $k_i\in\mathbb{R}^+$ for $i=1,\dots,m-2$ and $0<\sum_{i=1}^{m-2}k_i<1$.

In \cite{AvElHe}, as an application of the Leggett-Williams fixed point Theorem, it is proved the existence of at least one solution for the problem:
\[\begin{cases}
u^{(4)}(t)=f(u(t))\,,&t\in(0,1)\,,\\
u(0)=u'(0)=u(1)=u'(1)=0\,,
\end{cases}\]
under suitable conditions of $f$.

In this paper, as an application of the results here presented, on Section 5 we prove existence and multiplicity results for a fourth order problem with the same clamped beam boundary conditions. However, in our case, we consider a non-autonomous nonlinear part $f(t,u)$. Moreover, we prove, under suitable conditions of $f$, the existence of at least two or three solutions of the considered problem.

In addition, in that section we study a family of second order problems couped with the Dirichlet boundary conditions:
\[\begin{cases}
u''(t)+B\,u'(t)+f(t,u(t))=0\,,&t\in(0,1)\,,\\
u(0)=u(1)=0\,,
\end{cases}\]
where $B\in\mathbb{R}$.

In fact, such results are included in a more general framework, which is delivered to ensure general existence and multiplicity results of a one parameter family of $n^{\mathrm{th}}$-order boundary value problems given by
\begin{equation}
\label{Ec::Dif} (-1)^{n-k} T_n[M]\,u(t)=f(t,u(t))\,,\quad t\in I\equiv [a,b]\,,
\end{equation}
coupled with the so-called $(k,n-k)$ boundary conditions
\begin{equation}
\label{Ec::BC} \begin{split}
u(a)=u'(a)=\cdots=u^{(k-1)}(a)&=0\,,\\
u(b)=u'(b)=\cdots=u^{(n-k-1)}(b)&=0\,,
\end{split}
\end{equation}
where $k\in\{1,\dots,n-1\}$ and
\begin{equation}\label{Ec::Tn}T_n[M]\,u(t)=u^{(n)}(t)+p_1(t)\,u^{(n-1)}(t)+\cdots+(p_n(t)+M)\,u(t)\,,\quad t\in I\,,\end{equation}

 It is well-known that the solutions of problem \eqref{Ec::Dif}-\eqref{Ec::BC} are given as the fixed points of the integral operator
 \begin{equation}\label{Ec::OpIn}\mathcal L_1 \,u(t)=\int_a^b(-1)^{n-k}g_M(t,s)\,f(s,u(s))\,ds\,,\end{equation}
 where $g_M(t,s)$ is the associated Green's function of the operator $T_n[M]$ in the space related to the given boundary conditions.
 
 Thus, in order to find the fixed points of operator ${\mathcal L_1}$, we previously study, in Sections 3 and 4, the existence of one or multiple positive fixed points of an integral operator defined as follows:
	\begin{equation}\label{Ec::OpInt}\begin{array}{rcl}
\mathcal L\colon C(I)&\longrightarrow&C(I)\\
	u&\longmapsto&\mathcal L\,u(t):=\int_a^bG(t,s)\,f(s,u(s))\,ds\,,\end{array}\end{equation}
	where 
		\[\begin{array}{rcl}
	f\colon I\times \mathbb{R}&\longrightarrow&\mathbb{R}\\
	(t,u)&\longmapsto&f(t,u)\,,\end{array}\]
	is a continuous function satisfying that $f(t,u)\geqslant 0$ for all $(t,u)\in I\times \mathbb{R}^+$ and 
		\[\begin{array}{rcl}
	G\colon I\times I&\longrightarrow&\mathbb{R}\\
	(t,s)&\longmapsto&G(t,s)\,,\end{array}\]
	is an integral kernel.
	
	To this end, we impose the following hypothesis on the kernel $G$:
	\begin{itemize}
		\item [$\left(Pg_1\right)$] There exist $\varPhi$, $k_1$ and $k_2$ continuous functions on $ I$ such that $\varPhi(s)>0$ for all $s\in(a,b)$, $0<k_1(t)\leqslant k_2(t)$ for all $t\in(a,b)$ and
		\[\varPhi(s)\,k_1(t)\leqslant G(t,s)\leqslant \varPhi(s)\,k_2(t)\,,\quad \forall(t,s)\in I\times I\,.\]
	\end{itemize}

It is important to note that  (see for instance\cite{Cab, Cop, karlin, Lan} and references therein) there are obtained several results for the existence of one or multiple fixed points of integral operators by imposing similar conditions to property $(Pg_1)$ to the kernel $G$.

In this paper, we use the well-known Krasnoselski\u{\i}'s fixed point Theorem, collected in \cite{Kra}. Moreover, in order to obtain the existence of two or three fixed points we use two results due to Avery and Henderson, \cite{AvHe}, and Avery, \cite{Av}, respectively. The structure what  we follow is the one given in \cite{AndHof}, where these results are used to prove the existence of one or multiple solutions for a problem on time scales.

Our final purpose is to prove the existence of solutions of the problem \eqref{Ec::Dif}-\eqref{Ec::BC}. Thus, we need to guarantee that the related Green's function satisfies the property $(Pg_1)$.
In \cite{CabSaa}, by means of spectral theory, it has been obtained a characterization of the values of the parameter $M$ for which the Green's function, related to the $(k,n-k)$ boundary conditions, has constant sign. A fact which, in this case, also implies that such a function verifies the property $(Pg_1)$. Hence, as an application of the previously obtained fixed point theorems, we will prove the existence of one or multiple positive solutions for $(k,n-k)$ nonlinear boundary value problems.

In addition, this property can be extended for many different boundary conditions. For instance, in \cite{CabSaa2}, a fourth order problem coupled with the  simply supported boundary conditions, $u(a)=u(b)=u''(a)=u''(b)=0$, is studied. Thus, the results here shown can also be applied in such a case.

This paper is structured as follows: in  next section, we  describe the studied problem and show some preliminary results which are used along the paper. In Section 3, we obtain some results that ensure the existence of one or multiple fixed points by using the Krasnoseslki\u{\i}'s fixed point Theorem given in \cite{Kra}. Then, in Section 4, following the results of  Avery and Henderson, \cite{AvHe}, and Avery, \cite{Av}, we obtain the existence of at least two or three fixed points, respectively.  Finally, in Section 5,  as an application of these fixed point theorems, we deduce the existence and multiplicity of solutions of problem \eqref{Ec::Dif}-\eqref{Ec::BC}. Moreover, as particular cases, second and fourth order boundary value problems are considered.

	\section{Description of the problem and some previous fixed point existence results}
	The aim of this paper is to study the existence of some fixed points of the integral operator described in \eqref{Ec::OpInt} in an appropriate cone.
	
	First, let us define the concept of cone.
	
	\begin{definition}
		Let $\mathcal{B}$ be a real Banach space. A nonempty closed convex set $\mathcal P\subset \mathcal B$ is called a cone if it satisfies the following two conditions:
		\begin{enumerate}
			\item $\lambda\,x\in\mathcal P$ for all $x\in\mathcal P$ and $\lambda \geqslant 0$.
			\item If $x\in\mathcal{P}$ and $-x\in \mathcal P$, then $x=0$.
		\end{enumerate}
	\end{definition}

In the sequel, we describe the cone where the fixed points are found, as well as some constants which are used along the paper. 

Let us consider a subinterval $I_1=[a_1,b_1]\subset  I$ such that $k_1(t)>0$ for all $t\in I_1$. Then, we denote:
\[K_1=\max_{t\in I}k_1(t)>0\,,\quad m_1=\min_{t\in I_1}k_1(t)>0\,,\quad K_2=\max_{t\in I}k_2(t)>0\,.\]

Consider the Banach space $\mathcal B=C(I)$ coupled with the norm 
\[\|u\|_\infty=\max_{t\in I}|u(t)|\,,\]
and the cone
\[\mathcal P=\left\lbrace u\in \mathcal{B}\ \mid u(t)\geqslant \frac{k_1(t)}{K_2}\,\|u\|_{\infty}\,,\quad t\in I\right\rbrace \,.\]

In the sequel, to make the paper more readable, we show some preliminary results which will be used along the paper.
	
First, let us consider the Krasnoselski\u{\i}'s fixed point Theorem, \cite{Kra}:

\begin{theorem}
	\label{T::1}
	Let $\mathcal{B}$ be a Banach space, $\mathcal P\subset \mathcal B$ be a cone, and suppose that $\Omega_1$, $\Omega_2$ are bounded open balls of $\mathcal{B}$ centered at the origin, with $\overline{\Omega}_1\subset\Omega_2$. Suppose further that $\mathcal L\colon \mathcal P\cap(\overline{\Omega}_2\backslash \Omega_1)\rightarrow \mathcal P$ is a completely continuous operator such that either
	\[\left\| \mathcal Lu\right\| \leqslant \left\| u\right\| \,,\quad u\in \mathcal P\cap \partial \,\Omega_1\,,\quad \text{and}\quad \left\| \mathcal Lu\right\| \geqslant \left\| u\right\| \,,\quad u\in \mathcal P\cap \partial\, \Omega_2\,,\]
	or
	\[\left\| \mathcal Lu\right\| \geqslant \left\| u\right\| \,,\quad u\in \mathcal P\cap \partial \,\Omega_1\,,\quad \text{and}\quad \left\| \mathcal Lu\right\| \leqslant \left\| u\right\| \,,\quad u\in \mathcal P\cap \partial\,
	 \Omega_2\,,\]
	 holds. Then $\mathcal L$ has a fixed point in $\mathcal P\cap (\overline{\Omega}_2\backslash\Omega_1)$.
\end{theorem}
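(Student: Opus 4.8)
The plan is to prove this classical statement through fixed point index (degree) theory on the cone $\mathcal P$, computing the index on the two balls separately and combining them by additivity. First I would dispose of the trivial case: if $\mathcal L$ has a fixed point on $\mathcal P\cap\partial\,\Omega_1$ or on $\mathcal P\cap\partial\,\Omega_2$, we are already done, so I may assume $\mathcal Lu\ne u$ for every $u\in\mathcal P\cap(\partial\,\Omega_1\cup\partial\,\Omega_2)$. I would then invoke the fixed point index $i(\mathcal L,\mathcal P\cap\Omega,\mathcal P)$ for completely continuous self-maps of the cone, together with its four standard properties: normalization (a constant map with value in $\mathcal P\cap\Omega$ has index $1$), additivity/excision, homotopy invariance along admissible compact homotopies, and the solution property (a nonzero index forces a fixed point in $\mathcal P\cap\Omega$). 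Since both $\Omega_i$ are balls centered at the origin, $0\in\Omega_i$, which is needed for the normalization step.

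The heart of the argument is the computation of the two indices from the norm inequalities, which I would carry out after rephrasing them in eigenvalue form. On $\mathcal P\cap\partial\,\Omega_1$ the inequality $\|\mathcal Lu\|\le\|u\|$ (with no boundary fixed point) guarantees $\mathcal Lu\ne\mu u$ for every $\mu\geq1$: indeed $\mathcal Lu=\mu u$ would give $\mu\|u\|=\|\mathcal Lu\|\le\|u\|$, forcing $\mu=1$ and hence a fixed point. The homotopy $H(t,u)=t\,\mathcal Lu$, $t\in[0,1]$, is then admissible on the boundary (a boundary solution of $t\,\mathcal Lu=u$ would give $\mathcal Lu=(1/t)u$ with $1/t\geq1$), and since $H(0,\cdot)\equiv0$ with $0\in\Omega_1$, homotopy invariance and normalization yield $i(\mathcal L,\mathcal P\cap\Omega_1,\mathcal P)=1$. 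Symmetrically, on $\mathcal P\cap\partial\,\Omega_2$ the inequality $\|\mathcal Lu\|\ge\|u\|$ excludes $\mathcal Lu=\mu u$ for $0<\mu\le1$ and, by compactness of $\mathcal L(\mathcal P\cap\partial\,\Omega_2)$, gives $\inf_{u\in\mathcal P\cap\partial\,\Omega_2}\|\mathcal Lu\|>0$; I would then use an outward-pushing homotopy $H(\tau,u)=\mathcal Lu+\tau e$ with $e\in\mathcal P\setminus\{0\}$, whose fixed points on the boundary are ruled out by the expansion inequality and which, for $\tau$ large enough, has no fixed point in $\mathcal P\cap\overline\Omega_2$ at all because $\mathcal L$ is bounded there. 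Hence $i(\mathcal L,\mathcal P\cap\Omega_2,\mathcal P)=0$.

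Once both indices are known, additivity over the annular region $\mathcal P\cap(\Omega_2\setminus\overline\Omega_1)$ gives, in the compression case, $i(\mathcal L,\mathcal P\cap(\Omega_2\setminus\overline\Omega_1),\mathcal P)=0-1=-1\ne0$, so the solution property produces a fixed point in $\mathcal P\cap(\overline\Omega_2\setminus\Omega_1)$. In the expansion case the roles of $\Omega_1$ and $\Omega_2$ are exchanged, the two indices become $0$ and $1$ respectively, and the annular index equals $+1\ne0$, again yielding a fixed point. The main obstacle I anticipate is the index-$0$ computation on $\Omega_2$: verifying that the outward homotopy stays fixed-point-free on $\mathcal P\cap\partial\,\Omega_2$ for all $\tau\ge0$ is exactly the point where the expansion inequality $\|\mathcal Lu\|\ge\|u\|$ must be used (and where, in the concrete setting of $\mathcal B=C(I)$ with the monotone sup-norm on the cone $\mathcal P$, the verification becomes transparent), whereas the index-$1$ computation and the final additivity bookkeeping are routine given the properties of the index.
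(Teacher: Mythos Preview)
The paper does not prove this statement at all: Theorem~\ref{T::1} is quoted verbatim from Krasnoselski\u{\i}'s monograph~\cite{Kra} as a preliminary tool, with no argument supplied. The paper's own contributions start with its \emph{applications} (Theorem~\ref{T::2} onward), so there is no ``paper's proof'' to compare against.

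That said, your outline via the fixed point index on the cone is the standard modern route to this classical result, and the architecture (dispose of boundary fixed points, compute the two indices, subtract by additivity) is correct. The index-$1$ computation under the compression inequality via the homotopy $t\mapsto t\,\mathcal Lu$ is exactly right, as is the observation that for $\tau$ large the map $\mathcal Lu+\tau e$ has no fixed point in $\mathcal P\cap\overline\Omega_2$ because $\|u-\mathcal Lu\|=\tau\|e\|$ is bounded. However, your proposed justification for the index-$0$ step does not go through as written, and the suggested fallback to $C(I)$ with the monotone sup-norm does not rescue it. From $u=\mathcal Lu+\tau e$ with $\tau>0$ one gets $\mathcal Lu\le u$ in the cone order; monotonicity of the norm then gives only $\|\mathcal Lu\|\le\|u\|$, which combined with the expansion inequality yields $\|\mathcal Lu\|=\|u\|$ --- not a contradiction (take $\mathcal Lu$ equal to $u$ at the maximum point and strictly smaller elsewhere). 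So the outward-pushing homotopy is not shown to be admissible by the norm condition alone, even in the concrete setting. The correct argument for $i(\mathcal L,\mathcal P\cap\Omega_2,\mathcal P)=0$ under $\|\mathcal Lu\|\ge\|u\|$ is more delicate and is usually obtained either from the lemma ``$\mathcal Lu\ne\mu u$ for $0<\mu\le1$ on the boundary $\Rightarrow$ index $=0$'' (proved, e.g., in Guo--Lakshmikantham via an auxiliary radial retraction and Schauder) or by abandoning the index altogether and running Krasnoselski\u{\i}'s original retraction-and-Schauder argument. Either reference closes the gap; your sketch, as it stands, does not.
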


\begin{definition}
A map $\alpha$ is said to be a nonnegative continuous concave functional on a cone $\mathcal P$ of a real Banach space $\mathcal B$ if $\alpha\,\colon\,\mathcal P\,\rightarrow\,[0,+\infty)$ is continuous and
\[\alpha(t\,x+(1-t)\,y)\geqslant t\,\alpha(x)+(1-t)\,\alpha(y)\,,\quad \forall \,x\,,\ y\in\mathcal P\,,\quad t\in[0,1]\,.\]

Similarly, a map $\beta$ is said to be a nonnegative continuous convex functional on a cone $\mathcal P$ of a real Banach space $\mathcal B$ if $\beta\,\colon\,\mathcal P\,\rightarrow\,[0,+\infty)$ is continuous and
\[\beta(t\,x+(1-t)\,y)\leqslant t\,\beta(x)+(1-t)\,\beta(y)\,,\quad \forall \,x\,,\ y\in\mathcal P\,,\quad t\in[0,1]\,.\]
\end{definition}

Now, let us consider $\beta$, $\gamma$ and $\theta$, nonnegative continuous convex functionals on the cone $\mathcal P$, and $\alpha$ and $\psi$, nonnegative concave functionals on $\mathcal P$.

For nonnegative real numbers $d$, $p$ and $q$, we define the following subspaces of the cone $\mathcal P$:
\[\begin{split}
\mathcal P(\gamma, r)&=\left\lbrace u\in \mathcal P\ \mid\ \gamma(u)<r\right\rbrace\,,\\
\mathcal P(\gamma,\alpha,p,r)&=\left\lbrace u\in\mathcal P\ \mid\ p\leqslant \alpha(u)\,,\ \gamma(u)\leqslant r\right\rbrace \,,\\
\mathcal Q(\gamma,\beta,d,r)&=\left\lbrace u\in\mathcal P\ \mid\ \beta(u)\leqslant d\,,\ \gamma(u)\leqslant r\right\rbrace \,,\\
\mathcal P(\gamma,\theta,\alpha,p,q,r)&=\left\lbrace u\in\mathcal P\ \mid\ p\leqslant \alpha(u)\,,\ \theta(u)\leqslant q\,,\ \gamma(u)\leqslant r\right\rbrace \,.
\end{split}\]

In the sequel we introduce a result, proved in \cite{AvHe}, which ensures the existence of two fixed points on the cone $\mathcal P$.

\begin{theorem}\label{T::1.1}
	Let $\mathcal P$ be a cone in a real Banach space $\mathcal B$. Let $\alpha$ and $\gamma$ be increasing and nonnegative continuous functionals on $\mathcal P$. Let $\theta$ be a nonnegative continuous functional on $\mathcal P$ with $\theta(0)=0$ such that, for some positive constants $r$ and $M$,
	\[\alpha(u)\leqslant \theta(u)\leqslant \gamma(u)\quad \text{and}\quad \|u\|\leqslant M\,\alpha(u)\,,\quad \text{for all } u\in \overline{\mathcal P(\alpha,r)}\,.\]
	
Assume that there exist two positive numbers $p$ and $q$ with $p<q<r$ such that
	\[\theta(\lambda\,u)\leqslant \lambda\,\theta(u)\,,\quad \text{for all } 0\leqslant \lambda\leqslant 1\quad\text{and}\quad u\in\partial \mathcal P(\theta,q)\,.\]
	
	Suppose that $\mathcal L\colon  \overline{\mathcal P(\alpha,r)}\rightarrow \mathcal P$ is a completely continuous operator satisfying
	\begin{enumerate}[i)]
		\item $\alpha(\mathcal L\,u)>r$ for all $u\in\partial \mathcal P(\alpha,r)$,
		\item $\theta(\mathcal L\,u)<q$ for all $u\in \partial \mathcal P(\theta,q)$,
		\item $\mathcal P (\gamma,p)\neq \emptyset$ and $\gamma(\mathcal{L}\,u)>p$ for all $u\in\partial\mathcal P(\gamma,p)$.
	\end{enumerate}

Then, $\mathcal L$ has at least two fixed points $u_1$ and $u_2$ such that
\[p<\gamma(u_1)\,, \quad\text{with}\quad \theta(u_1)<q\,,\]
and
\[q<\theta(u_2)\,, \quad\text{with}\quad \alpha(u_2)<r\,.\]
\end{theorem}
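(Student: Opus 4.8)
The plan is to prove the theorem through the theory of the fixed point index on the cone $\mathcal P$, following the expansion/compression scheme that underlies the Avery--Henderson result. Throughout, I write $i(\mathcal L, U, \mathcal P)$ for the fixed point index of the completely continuous map $\mathcal L$ over a relatively open bounded set $U\subset\mathcal P$, which is well defined as soon as $\mathcal L$ has no fixed point on the boundary $\partial U$.

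First I would record the geometric skeleton on which everything rests. Since $\alpha(u)\leqslant\theta(u)\leqslant\gamma(u)$ on $\overline{\mathcal P(\alpha,r)}$ and $p<q<r$, the sublevel sets are nested,
\[\mathcal P(\gamma,p)\subset\mathcal P(\theta,q)\subset\mathcal P(\alpha,r)\,,\]
and each of them is bounded because $\|u\|\leqslant M\,\alpha(u)\leqslant M\,r$ there. Conditions $i)$, $ii)$ and $iii)$ immediately show that $\mathcal L$ has no fixed point on any of the three boundaries: for instance on $\partial\mathcal P(\alpha,r)$ one has $\alpha(u)=r<\alpha(\mathcal L u)$, so $\mathcal L u\neq u$, and analogously on $\partial\mathcal P(\theta,q)$ and $\partial\mathcal P(\gamma,p)$. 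Hence all the indices below are defined.

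The heart of the argument is the computation of three indices, for which I would isolate two homotopy lemmas. For an increasing functional $\psi$ with $\psi(\mathcal L u)>c$ on $\partial\mathcal P(\psi,c)$, I fix some $w\in\mathcal P\setminus\{0\}$ and use the homotopy $H(\lambda,u)=\mathcal L u+\lambda\,w$, $\lambda\geqslant0$: monotonicity gives $\psi(H(\lambda,u))\geqslant\psi(\mathcal L u)>c$ on the boundary, so no fixed points occur along the homotopy, while for $\lambda$ large $H(\lambda,\cdot)$ has no fixed point in the bounded set at all; hence the index is $0$. Applying this to $\alpha$ with $i)$ and to $\gamma$ with $iii)$ yields $i(\mathcal L,\mathcal P(\alpha,r),\mathcal P)=0$ and $i(\mathcal L,\mathcal P(\gamma,p),\mathcal P)=0$. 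For the middle set I would instead use $H(\lambda,u)=\lambda\,\mathcal L u$, $\lambda\in[0,1]$: the hypothesis $\theta(\lambda\,u)\leqslant\lambda\,\theta(u)$ together with $ii)$ gives $\theta(\lambda\,\mathcal L u)\leqslant\lambda\,\theta(\mathcal L u)<\lambda\,q\leqslant q$ on $\partial\mathcal P(\theta,q)$, while $0\notin\partial\mathcal P(\theta,q)$ since $\theta(0)=0<q$; thus the index equals that of the zero map, namely
\[i(\mathcal L,\mathcal P(\theta,q),\mathcal P)=1\,.\]

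Finally I would close with the additivity (excision) property on the nested sets. From $i(\mathcal L,\mathcal P(\alpha,r),\mathcal P)=i(\mathcal L,\mathcal P(\theta,q),\mathcal P)+i(\mathcal L,\mathcal P(\alpha,r)\setminus\overline{\mathcal P(\theta,q)},\mathcal P)$ and the values above we get $0=1+i(\cdots)$, so the index over $\mathcal P(\alpha,r)\setminus\overline{\mathcal P(\theta,q)}$ is $-1$ and there is a fixed point $u_2$ there, which satisfies $\alpha(u_2)<r$ and $q<\theta(u_2)$. Likewise $1=0+i(\mathcal L,\mathcal P(\theta,q)\setminus\overline{\mathcal P(\gamma,p)},\mathcal P)$ produces a fixed point $u_1$ with $\theta(u_1)<q$ and $p<\gamma(u_1)$, exactly the asserted localizations. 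The step I expect to be the main obstacle is the rigorous justification of the two homotopy lemmas, namely checking that the homotopies stay fixed-point-free on the boundaries and handling the normalizing element $w$ in the expansion case; the bookkeeping with additivity afterwards is routine. It is worth emphasizing that the sub-homogeneity hypothesis $\theta(\lambda\,u)\leqslant\lambda\,\theta(u)$ is precisely what rescues the compression computation, where $\theta$ is not assumed to be monotone.
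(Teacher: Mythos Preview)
The paper does not prove this theorem: it is quoted verbatim as a preliminary result from Avery and Henderson \cite{AvHe} (see the sentence preceding the statement), so there is no ``paper's own proof'' to compare against. Your fixed-point-index argument is the standard route to results of this type and is, in outline, precisely how the original Avery--Henderson theorem is established: nest the three sublevel sets, show the index is $0$ on the outer and inner sets via an expansion homotopy $u\mapsto\mathcal L u+\lambda w$ (using that $\alpha$ and $\gamma$ are increasing with respect to the cone order), show the index is $1$ on the middle set via the contraction homotopy $u\mapsto\lambda\,\mathcal L u$ (this is exactly where the sub-homogeneity $\theta(\lambda u)\leqslant\lambda\,\theta(u)$ and $\theta(0)=0$ enter, since $\theta$ is not assumed monotone), and then read off two fixed points from additivity on the two annuli. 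Your sketch is sound; the only point worth tightening is the expansion step, where you should make explicit that ``increasing'' means monotone with respect to the partial order induced by $\mathcal P$, so that $\mathcal L u+\lambda w\geqslant\mathcal L u$ indeed gives $\psi(\mathcal L u+\lambda w)\geqslant\psi(\mathcal L u)$.
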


\vspace{0.5cm}
Finally, we introduce a result, proved in \cite{Av}, which warrants the existence of three fixed points of the operator $\mathcal L$ on the cone $\mathcal P$.

\begin{theorem}
	\label{T::1.2}
		Let $\mathcal P$ be a cone in a real Banach space $\mathcal B$, and let $r$ and $M$ be positive numbers. Assume that $\alpha$ and $\psi$ are nonnegative, continuous and  concave functionals on $\mathcal P$, and $\gamma$, $\beta$ and $\theta$ are nonnegative, continuous and convex functionals on $\mathcal P$ with
		\[\alpha(u)\leqslant \beta(u)\quad \text{and}\quad \|u\|\leqslant M\,\gamma(u)\,,\quad\forall\, u\in \overline{\mathcal P (\gamma,r)}\,.\]
	Suppose that $\mathcal L\colon  \overline{\mathcal P(\gamma,r)}\rightarrow  \overline{\mathcal P(\gamma,r)}$ is a completely continuous operator and there exist nonnegative numbers $h$, $d$, $p$, $q$, with $0<d<p$, such that:
	\begin{enumerate}[a)]
		\item $\left\lbrace u\in \mathcal P(\gamma,\theta,\alpha,p,q,r)\ \mid\ \alpha(u)>p\right\rbrace\neq \emptyset$ and $\alpha(\mathcal L\,u)>p$ for $u\in \mathcal P(\gamma,\theta,\alpha,p,q,r)$,
		\item $\left\lbrace u\in \mathcal P(\gamma,\beta,\psi,h,d,r)\ \mid\ \beta<d\right\rbrace\neq \emptyset$ and $\beta(\mathcal L\,u)<d$ for $u\in \mathcal P(\gamma,\beta,\psi,h,d,r)$,
		\item $\alpha(\mathcal L\,u)>p$ for $u\in \mathcal P(\gamma,\alpha,p,r)$ with $\theta(\mathcal L\,u)>q$,
		\item $\beta(\mathcal L\,u)<d$ for $u\in \mathcal Q(\gamma,\beta,d,r)$ with $\Psi(\mathcal L\,u)<h$.
	\end{enumerate}	

Then $\mathcal L$ has at least three fixed points $u_1$, $u_2$, $u_3\in\overline{\mathcal P (\gamma,r)}$ such that
\[\beta(u_1)<d\,,\quad p<\alpha(u_2)\quad \text{and}\quad \text d<\beta (u_3)\quad \text{with}\quad \alpha(u_3)<p\,.\]
\end{theorem}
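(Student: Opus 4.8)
The natural tool is the fixed point index for completely continuous operators on the cone $\mathcal P$. The hypothesis $\|u\|\le M\,\gamma(u)\le M\,r$ on $\overline{\mathcal P(\gamma,r)}$ guarantees that this set is bounded, so the index $i(\mathcal L,U,\mathcal P)$ is well defined for every relatively open bounded $U\subseteq \mathcal P$ on whose boundary $\mathcal L$ has no fixed point. The plan is to split $\mathcal P(\gamma,r)$ into three pairwise disjoint relatively open pieces and compute the index on each. Set
\[
L=\{u\in\mathcal P(\gamma,r)\ \mid\ \beta(u)<d\}\,,\qquad U=\{u\in\mathcal P(\gamma,r)\ \mid\ \alpha(u)>p\}\,,
\]
and let $\mathcal M=\mathcal P(\gamma,r)\setminus(\overline L\cup\overline U)$ be the ``middle'' region. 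Since $\alpha(u)\le\beta(u)$ and $d<p$, any $u\in U$ satisfies $\beta(u)\ge\alpha(u)>p>d$, so $L\cap U=\emptyset$ and the three pieces are genuinely disjoint.

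First I would record the baseline computation $i(\mathcal L,\mathcal P(\gamma,r),\mathcal P)=1$. Because $\mathcal L$ maps the closed, bounded, convex set $\overline{\mathcal P(\gamma,r)}$ into itself, a Schauder-type homotopy of $\mathcal L$ to a constant map lying in $\mathcal P(\gamma,r)$ yields index $1$ (one may assume, by the solution property, that there is no fixed point with $\gamma(u)=r$; otherwise such a point is produced for free).

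Next come the two main computations $i(\mathcal L,L,\mathcal P)=1$ and $i(\mathcal L,U,\mathcal P)=1$. For $L$, a sublevel set of the convex functional $\beta$, it suffices to show that $\mathcal L$ pushes its boundary strictly inward, i.e. $\beta(\mathcal L u)<d$ whenever $\beta(u)=d$ and $\gamma(u)\le r$; such a $u$ lies in $\mathcal Q(\gamma,\beta,d,r)$, and I would dispatch it by the alternative ``$\psi(\mathcal L u)<h$ or $\psi(\mathcal L u)\ge h$'': in the first case condition d) applies directly, while in the second the auxiliary concave functional $\psi$ places $u$ in $\mathcal P(\gamma,\beta,\psi,h,d,r)$ so that condition b) applies. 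Symmetrically, for $U$, a superlevel set of the concave functional $\alpha$, I would show $\alpha(\mathcal L u)>p$ whenever $\alpha(u)=p$ and $\gamma(u)\le r$ (so $u\in\mathcal P(\gamma,\alpha,p,r)$): condition c) covers the case $\theta(\mathcal L u)>q$, and the convex functional $\theta$ together with condition a) covers the case $\theta(\mathcal L u)\le q$. In both cases the strict boundary inequality rules out fixed points on $\partial L$ and $\partial U$ and, by the standard index lemma (inward maps over functional sub/superlevel sets have index one), gives index $1$; the nonemptiness clauses in a)--c) ensure the relevant sets are nontrivial.

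Finally I would invoke additivity of the index. As $L$, $U$, $\mathcal M$ are disjoint relatively open subsets of $\mathcal P(\gamma,r)$ with no fixed points of $\mathcal L$ on the separating faces,
\[
1=i(\mathcal L,\mathcal P(\gamma,r),\mathcal P)=i(\mathcal L,L,\mathcal P)+i(\mathcal L,U,\mathcal P)+i(\mathcal L,\mathcal M,\mathcal P)=2+i(\mathcal L,\mathcal M,\mathcal P)\,,
\]
whence $i(\mathcal L,\mathcal M,\mathcal P)=-1\ne0$. The solution property then yields a fixed point in each piece: $u_1\in L$ with $\beta(u_1)<d$, $u_2\in U$ with $p<\alpha(u_2)$, and $u_3\in\mathcal M$ with $\alpha(u_3)<p$ and $d<\beta(u_3)$. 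I expect the main obstacle to be exactly the two boundary verifications above: each requires splicing the ``interior'' condition (a), resp. b)) with the ``overflow'' condition (c), resp. d)) through the auxiliary functional $\theta$ (resp. $\psi$), and one must check that no fixed point is stranded on the intermediate faces $\{\theta=q\}$ and $\{\psi=h\}$, so that the homotopies defining the index remain admissible.
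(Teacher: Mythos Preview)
The paper does not prove Theorem~\ref{T::1.2}; it is quoted as a preliminary result from Avery~\cite{Av}, so there is no in-paper proof to compare against. Your fixed-point-index outline is in fact the standard route used in~\cite{Av}: one shows $i(\mathcal L,\mathcal P(\gamma,r),\mathcal P)=1$, then $i(\mathcal L,L,\mathcal P)=i(\mathcal L,U,\mathcal P)=1$ for the sub/superlevel sets you define, and concludes by additivity.

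One point deserves care. In your boundary verifications you write, e.g., ``if $\psi(\mathcal L u)\ge h$ then the concave functional $\psi$ places $u$ in $\mathcal P(\gamma,\beta,\psi,h,d,r)$''. As stated this is false: $\psi(\mathcal L u)\ge h$ says nothing about $\psi(u)$ unless $u$ is already a fixed point. What makes the index computation go through is the homotopy $H(t,u)=(1-t)\mathcal L u+tu_0$, with $u_0$ taken from the nonempty set in b). If $u=H(t,u)$ with $\beta(u)=d$, concavity of $\psi$ and $\psi(u_0)\ge h$ give $\psi(u)\ge(1-t)\psi(\mathcal L u)+t\psi(u_0)\ge h$ in the case $\psi(\mathcal L u)\ge h$, so b) applies; the other case is handled by d). The symmetric remark applies to the pair a)/c) with the convexity of $\theta$ and an element $u_0$ from the nonempty set in a). With this correction the argument is complete, and your final sentence already anticipates exactly this issue.
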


\section{Existence of fixed points by means of Krasnoselski\u{\i} fixed point Theorem}

The aim of this section consists on ensuring the existence of at least a fixed point of operator $\mathcal{L}$, defined in \eqref{Ec::OpInt}. Such an existence will follow as an application of Theorem \ref{T::1}.

Let us consider the following conditions on $f$:
\begin{itemize}
	\item[$(H_1)$] There exists $p>0$ such that
	\[f(t,u)\leqslant \dfrac{p}{K_2\,\int_a^b\varPhi(s)\,ds}\,,\quad \forall\, t\in I\,,\ \forall\, u\in[0,p]\,.\]
	\item[$(H_2)$] There exists $q>0$ such that 
		\[f(t,u)\geqslant \dfrac{K_2\,u}{K_1\,\int_{a_1}^{b_1}k_1(s)\,\varPhi(s)\,ds}\,,\quad \forall\, t\in I_1\,,\ \forall\, u\in\left[ \frac{m_1}{K_2}\,q,q\right] \,.\]
\end{itemize}

Following the same steps as in \cite[Theorem 2.3]{AndHof}, we prove the following result

\begin{theorem}
	\label{T::2}Suppose that there exist two positive numbers $p\neq q$ such that condition $(H_1)$ is satisfied with respect to $p$ and condition $(H_2)$ is satisfied with respect to $q$. Then, provided that the integral kernel $G$ satisfies $(Pg_1)$, operator $\mathcal{L}$, defined in \eqref{Ec::OpInt}, has a fixed point, $u\in \mathcal P$, such that $\|u\|_\infty$ lies between $p$ and $q$.
\end{theorem}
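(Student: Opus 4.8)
The plan is to realize the two norm inequalities required by Krasnoselski\u{\i}'s Theorem \ref{T::1} on two concentric balls whose radii are $p$ and $q$, after first checking that $\mathcal L$ is a completely continuous self-map of the cone $\mathcal P$. For cone invariance, note that for $u\in\mathcal P$ we have $u\geqslant 0$, hence $f(s,u(s))\geqslant 0$; the upper bound in $(Pg_1)$ then gives $\mathcal L u(t)\leqslant k_2(t)\int_a^b\varPhi(s)\,f(s,u(s))\,ds$, so that $\|\mathcal L u\|_\infty\leqslant K_2\int_a^b\varPhi(s)\,f(s,u(s))\,ds$, while the lower bound gives $\mathcal L u(t)\geqslant k_1(t)\int_a^b\varPhi(s)\,f(s,u(s))\,ds\geqslant \frac{k_1(t)}{K_2}\,\|\mathcal L u\|_\infty$, which is exactly the defining inequality of $\mathcal P$. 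Complete continuity follows from the continuity of $G$ and $f$ together with the Arzel\`a--Ascoli theorem, by a standard argument.

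Assume first that $p<q$ (if $q<p$ one exchanges the two roles and invokes the second alternative of Theorem \ref{T::1}), and set $\Omega_1=\{u\in\mathcal B:\|u\|_\infty<p\}$ and $\Omega_2=\{u\in\mathcal B:\|u\|_\infty<q\}$, so that $\overline\Omega_1\subset\Omega_2$. On $\mathcal P\cap\partial\Omega_1$ one has $0\leqslant u(t)\leqslant p$, so $(H_1)$ bounds $f(s,u(s))\leqslant p/(K_2\int_a^b\varPhi(s)\,ds)$; substituting into the upper estimate for $\|\mathcal L u\|_\infty$ obtained above yields $\|\mathcal L u\|_\infty\leqslant p=\|u\|_\infty$ for every $u\in\mathcal P\cap\partial\Omega_1$.

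For the reverse inequality on $\partial\Omega_2$, take $u\in\mathcal P\cap\partial\Omega_2$; the cone condition gives $u(t)\geqslant \frac{k_1(t)}{K_2}\,q$, hence for $t\in I_1$ we have $u(t)\in\left[\frac{m_1}{K_2}\,q,\,q\right]$, precisely the range in which $(H_2)$ is valid. Discarding the nonnegative integrand outside $I_1$, using the lower bound of $(Pg_1)$, then $(H_2)$, and finally the cone estimate $u(s)\geqslant \frac{k_1(s)}{K_2}\,q$ once more, I expect the factor $K_2$ and the integral $\int_{a_1}^{b_1}k_1(s)\,\varPhi(s)\,ds$ to cancel telescopically, leaving $\mathcal L u(t)\geqslant k_1(t)\,q/K_1$ for all $t\in I$. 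Maximizing over $t$ and using $\max_{t\in I}k_1(t)=K_1$ then gives $\|\mathcal L u\|_\infty\geqslant q=\|u\|_\infty$. This constant-chasing in the lower estimate is the delicate bookkeeping step, and it is exactly why $(H_2)$ carries the factors $K_2$, $K_1$ and the weighted integral in the precise positions shown.

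With both inequalities established, the first alternative of Theorem \ref{T::1} applies to $\mathcal L\colon \mathcal P\cap(\overline\Omega_2\setminus\Omega_1)\to\mathcal P$ and produces a fixed point $u\in\mathcal P\cap(\overline\Omega_2\setminus\Omega_1)$, whose norm $\|u\|_\infty$ therefore lies between $p$ and $q$. The only real obstacle is the cancellation of constants in the lower bound; everything else is routine once the self-map property of $\mathcal L$ on $\mathcal P$ is in hand.
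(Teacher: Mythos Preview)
Your argument is correct and follows essentially the same route as the paper: cone invariance via the two-sided bound $(Pg_1)$, the upper estimate on $\partial\Omega_p$ from $(H_1)$, and the lower estimate on $\partial\Omega_q$ by restricting to $I_1$, invoking $(H_2)$, and using the cone inequality $u(s)\geqslant \frac{k_1(s)}{K_2}q$ to make the constants cancel. The only cosmetic difference is that the paper takes the supremum in $t$ at the outset (extracting $K_1$ immediately), whereas you carry $k_1(t)$ through and maximize at the end; the two computations are line-by-line equivalent.
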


\begin{proof}
	First let us see that $\mathcal L(\mathcal P)\subset \mathcal P$. 
	
	Let $u\in \mathcal P$, we have:
	\begin{align}
	\nonumber
	\mathcal L\,u(t):=& \int_a^b G(t,s)\,f(s,u(s))\,ds\geqslant \int_a^b k_1(t)\,\varPhi(s)\,f(s,u(s))\,ds\\\nonumber=&\dfrac{k_1(t)}{K_2}
\int_a^bK_2\,\varPhi(s)\,f(s,u(s))\,ds\geqslant \dfrac{k_1(t)}{K_2}
\int_a^b\sup_{t\in I}\left\lbrace G(t,s)\right\rbrace \,f(s,u(s))\,ds\\\nonumber\geq& \dfrac{k_1(t)}{K_2}
\sup_{t\in I}\left\lbrace \int_a^bG(t,s)\,f(s,u(s))\,ds\right\rbrace =\dfrac{k_1(t)}{K_2}
\left\| \mathcal L\,u\right\|_\infty\,,\quad \forall t\in I\,.\end{align}

Thus,  since $f$ is a continuous function on $I\times \mathbb{R}$, $\mathcal L\,u\in \mathcal P$ and $\mathcal L\colon \mathcal P\rightarrow\mathcal P$ is a completely continuous operator.

Now, let us define the open balls centered at the origin as follows:
\[\Omega_p=\left\lbrace u\in C(I)\ \mid\ \|u\|_\infty<p\right\rbrace\quad \text{and}\quad \Omega_q=\left\lbrace u\in C(I)\ \mid\ \|u\|_\infty<q\right\rbrace\,. \]

From $(Pg_1)$ and the positiveness of $f$ for all $u\in \mathcal P$, we have that the following inequality is fulfilled.
\begin{equation}\label{Ec::Des1}\|\mathcal L\,u\|_\infty =\sup_{t\in I}\left\lbrace \int_a^b G(t,s)\,f(s,u(s))\,ds\right\rbrace \leqslant K_2\,\int_a^b\varPhi(s)\,f(s,u(s))\,ds\,.\end{equation}

On the other hand, for $u\in \mathcal P\cap \partial\Omega_p$, we have that $\|u\|_\infty=p$, so, from \eqref{Ec::Des1} and $(H_1)$, we have:
\[\|\mathcal L\,u\|_\infty\leqslant K_2\,\int_a^b\varPhi(s)\,f(s,u(s))\,ds\leqslant K_2\int_a^b\varPhi(s)\dfrac{p}{K_2\,\int_a^b\varPhi(s)\,ds}\,ds=p=\|u\|_\infty\,.\]

Thus, $\|\mathcal L\,u\|_\infty\leqslant \|u\|_\infty$ for all $u\in \mathcal P\cap\partial \Omega_p$.

Now, using  $(Pg_1)$ again, we have, for all $u\in \mathcal P$:
{\small \begin{equation}\label{Ec::Des2}\begin{split}\|\mathcal L\,u \|_\infty =&\sup_{t\in I}\left\lbrace \int_a^b G(t,s)\,f(s,u(s))\,ds\right\rbrace \geqslant\sup_{t\in I} \left\lbrace \int_a^bk_1(t)\,\varPhi(s)\,f(s,u(s))\,ds\right\rbrace \\=&K_1\int_a^b\varPhi(s)\,f(s,u(s))\,ds \,.\end{split}\end{equation}}

Let $v\in \mathcal P\cap \partial \Omega_q$, then \[\min_{t\in I_1}v(t)\geqslant \min_{t\in I_1}\dfrac{k_1(t)}{K_2}\|v\|_\infty=\dfrac{m_1}{K_2}\,q\,.\]

 So, from \eqref{Ec::Des2} and $(H_2)$:
\[\begin{split}\|\mathcal L\,v \|_\infty \geq& K_1\int_a^b\varPhi(s)\,f(s,v(s))\,ds\geqslant K_1\int_{a_1}^{b_1}\varPhi(s)\,f(s,v(s))\,ds\\\geqslant &K_1\,\int_{a_1}^{b_1}\varPhi(s)\,\dfrac{K_2\,v(s)}{K_1\,\int_{a_1}^{b_1}k_1(s)\,\varPhi(s)\,ds}\,ds\\\geqslant &\int_{a_1}^{b_1}\varPhi(s)\,\dfrac{K_2 \dfrac{k_1(s)}{K_2}\,\|v\|_\infty}{\int_{a_1}^{b_1}k_1(s)\,\varPhi(s)\,ds}\,ds=\|v\|_\infty=q\,.\end{split}\]

Hence, $\|\mathcal L\,v\|_\infty\geqslant \|v\|_\infty$ for all $v\in \mathcal P\cap\partial\Omega_q$.

Then, from Theorem \ref{T::1}, we conclude that $\mathcal L$ has a fixed point in $\mathcal P$ such that $\|u\|_\infty$ lies between $p$ and $q$.
\end{proof}

Define
\[\begin{split}f_0^+(t):=\limsup_{u\rightarrow0^+}\dfrac{f(t,u)}{u}\,,\quad f_\infty^+(t):=\limsup_{u\rightarrow\infty}\dfrac{f(t,u)}{u}\,,\\
f_0^-(t):=\liminf_{u\rightarrow0^+}\dfrac{f(t,u)}{u}\,,\quad f_\infty^-(t):=\liminf_{u\rightarrow\infty}\dfrac{f(t,u)}{u}\,.\end{split}\]
 
 Analogously to \cite[Corollary 2.4]{AndHof}, we have
 \begin{corollary}\label{Cor::2.4}
 	If $G$ satisfies $(Pg_1)$, then operator $\mathcal L$, defined in \eqref{Ec::OpInt},  has a fixed point in $\mathcal P$, provided that one of two following conditions hold 
 	\begin{itemize}
 		\item [$(H_3)$] $f_0^+(t)<\frac{1}{K_2\,\int_a^b\varPhi(s)\,ds}$ for $t\in I$ and $f_\infty^-(t)>\frac{K_2^2}{K_1\,m_1\int_{a_1}^{b_1}k_1(s)\,\varPhi(s)\,ds}$ for $t\in I_1$,
 		\item [$(H_4)$] $f_\infty^+(t)<\frac{1}{K_2\,\int_a^b\varPhi(s)\,ds}$ for $t\in I$ and $f_0^-(t)>\frac{K_2^2}{K_1\,m_1\int_{a_1}^{b_1}k_1(s)\,\varPhi(s)\,ds}$ for $t\in I_1$.
 	\end{itemize}
 In particular, there is one fixed point if $f_0^+=0$ and $f_\infty^-=\infty$ ($f_\infty^+=0$ and $f_0^-=\infty$).
 \end{corollary}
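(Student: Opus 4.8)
The plan is to deduce the corollary directly from Theorem \ref{T::2} by producing, under each of the hypotheses $(H_3)$ and $(H_4)$, two positive numbers $p\neq q$ for which $(H_1)$ holds with respect to $p$ and $(H_2)$ holds with respect to $q$. The guiding principle is that the bound on $f_0^+$ (resp. $f_\infty^+$) controls $f$ from above near $u=0$ (resp. near $u=\infty$) and yields $(H_1)$, while the bound on $f_\infty^-$ (resp. $f_0^-$) controls $f$ from below near $u=\infty$ (resp. near $u=0$) and yields $(H_2)$. Writing $c_1=\frac{1}{K_2\int_a^b\varPhi(s)\,ds}$ and $c_2=\frac{K_2^2}{K_1\,m_1\int_{a_1}^{b_1}k_1(s)\varPhi(s)\,ds}$, I would treat the two cases separately.

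Assume first $(H_3)$. From $f_0^+(t)<c_1$ on $I$ I would extract a threshold $p_0>0$ such that $f(t,u)\le c_1 u$ for all $t\in I$ and $u\in[0,p_0]$; then $f(t,u)\le c_1 u\le c_1 p$ whenever $u\in[0,p]$ with $p\le p_0$, so $(H_1)$ holds for any such small $p$ (finiteness of $f_0^+$ forces $f(t,0)=0$, so the bound is consistent at $u=0$). From $f_\infty^-(t)>c_2$ on $I_1$ I would produce a threshold $U>0$ with $f(t,u)\ge c_2 u\ge \frac{K_2}{K_1\int_{a_1}^{b_1}k_1(s)\varPhi(s)\,ds}\,u$ for all $t\in I_1$ and $u\ge U$, the second inequality holding because $m_1\le K_2$. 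Choosing $q\ge\frac{K_2}{m_1}U$ forces the whole interval $[\frac{m_1}{K_2}q,q]$ into $[U,\infty)$, so $(H_2)$ holds for such large $q$. Since $p$ is small and $q$ is large we have $p<q$, and Theorem \ref{T::2} furnishes the fixed point.

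The case $(H_4)$ is symmetric. Here $f_\infty^+(t)<c_1$ gives $f(t,u)\le c_1 u$ for $u$ beyond some $U'$; on $[0,U']$ the continuous $f$ is bounded by a constant $\mathcal M$, so for $p$ large enough that $c_1 p\ge\mathcal M$ one gets $f(t,u)\le c_1 p$ on all of $[0,p]$, i.e. $(H_1)$ for large $p$. Dually, $f_0^-(t)>c_2$ gives $f(t,u)\ge\frac{K_2}{K_1\int_{a_1}^{b_1}k_1(s)\varPhi(s)\,ds}\,u$ for all small $u>0$, so taking $q$ small enough that $[\frac{m_1}{K_2}q,q]$ lies in the relevant neighbourhood of $0$ yields $(H_2)$ for small $q$; now $q<p$ and Theorem \ref{T::2} applies again. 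The final ``in particular'' statement is then immediate, since $f_0^+=0$ and $f_\infty^-=\infty$ (resp. $f_\infty^+=0$ and $f_0^-=\infty$) trivially satisfy the strict inequalities of $(H_3)$ (resp. $(H_4)$).

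The step I expect to be the main obstacle is the uniformity in $t$: the hypotheses are stated pointwise as $\limsup$/$\liminf$ bounds for each fixed $t$, whereas $(H_1)$ and $(H_2)$ demand that $f(t,u)\le c_1 u$ and $f(t,u)\ge c_2 u$ hold simultaneously for \emph{all} $t$ in a compact interval. Passing from the pointwise bounds to a single threshold ($p_0$, $U$, $U'$) valid for every $t$ is not automatic, because interchanging $\min_{t\in I_1}$ with $\liminf_{u\to\infty}$ is illegitimate in the needed direction: one only has $\liminf_{u\to\infty}\min_{t\in I_1}(\cdot)\le\min_{t\in I_1}\liminf_{u\to\infty}(\cdot)$, yet it is precisely the left-hand quantity that must be controlled. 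I would bridge this gap by invoking the continuity of $f$ together with the compactness of $I$ and $I_1$ (understanding the asymptotic bounds uniformly, as in \cite[Corollary 2.4]{AndHof}); this is the one place demanding genuine care, the remaining work being the elementary constant-chasing indicated above.
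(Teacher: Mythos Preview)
Your approach is essentially the same as the paper's: both reduce to Theorem~\ref{T::2} by extracting $p$ and $q$ from the asymptotic hypotheses, with the only cosmetic difference that under $(H_4)$ the paper verifies $(H_1)$ via a bounded/unbounded dichotomy on $f$, whereas your argument (bound $f$ on the compact set $I\times[0,U']$ by continuity, then pick $p$ large) is a cleaner packaging of the same idea. Your concern about uniformity in $t$ is well placed, but the paper handles it exactly as you propose---it simply asserts the existence of uniform thresholds $p$ and $q$, tacitly reading the hypotheses as in \cite[Corollary~2.4]{AndHof}---so you are not missing anything the paper supplies.
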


\begin{proof}
First, let us assume that $(H_3)$ is fulfilled.

Then, there exists $p>0$,  small enough, and $q>0$, big enough, such that:
\begin{itemize}
	\item $\dfrac{f(t,u)}{u}\leqslant \dfrac{1}{K_2\,\int_a^b\varPhi(s)\,ds}$ for all $t\in  I$ and $0< u\leqslant p$.
	\item $\dfrac{f(t,u)}{u}\geqslant \dfrac{K_2^2}{K_1\,m_1\int_{a_1}^{b_1}k_1(s)\,\varPhi(s)\,ds}$ for all $t\in I_1$ and $u\geqslant q$.
\end{itemize}
	
Thus, $(H_1)$ and $(H_2)$ are fulfilled for $p>0$ and $\frac{K_2}{m_1}q>0$, respectively. So, from Theorem \ref{T::2}, $\mathcal{L}$ has a fixed point on $\mathcal P$.

Now, suppose that $(H_4)$ is verified.

Then, there exist $0<p<q$, such that:
\begin{itemize}
	\item $\dfrac{f(t,u)}{u}\geqslant \dfrac{K_2^2}{K_1\,m_1\int_{a_1}^{b_1}k_1(s)\,\varPhi(s)\,ds}$ for all $t\in  I_1$ and $0< u\leqslant p$.
	\item $\dfrac{f(t,u)}{u}\leqslant \dfrac{1}{K_2\int_{a}^{b}\varPhi(s)\,ds}$ for all $t\in I$ and $u\geqslant q$.
\end{itemize}

Hence, $(H_2)$ is fulfilled for $p>0$.

Let us see that $(H_1)$ is also satisfied.

First, let us assume that $f(t,u)$ is a bounded function. That is, there exists $N>0$ such that $0\leqslant f(t,u)\leqslant N$ for all $t\in I$ and $0\leqslant u<\infty$. Then, let us choose
\[r\geqslant N\,K_2\,\int_a^b\varPhi(s)\,ds \,,\]
such that
\[f(t,u)\leqslant N\leqslant \dfrac{r}{K_2\,\int_a^b\varPhi(s)\,ds}\,,\quad \forall t\in I\,,\ 0\leqslant u\leqslant r\,.\]

Thus, $(H_1)$ holds for this $r>0$.

Now, suppose that $f$ is not bounded. Then, there exists $t_0\in I$ and $\bar r\geqslant q$ such that $f(t,u)\leqslant f(t_0,\bar r)$ for all $t\in I$ and $0\leqslant u\leqslant \bar r$. Thus, by using the hypothesis, we have
\[f(t,u)\leqslant f(t_0,\bar r)\leqslant\dfrac{\bar r}{K_2\int_{a}^{b}\varPhi(s)\,ds}\,,\quad \forall t\in I\,,\ 0\leqslant u\leqslant \bar r\,.\]

Therefore, $(H_1)$ is fulfilled for such $\bar r>0$, and the result follows again from Theorem \ref{T::2}.
\end{proof}

To finish this section, we obtain the existence of at least two fixed points of the integral operator $\mathcal L$.

Let us consider the following conditions on $f$:
\begin{itemize}
	\item[$(H_1^*)$] There exists $p>0$ such that  $(H_1)$ is fulfilled and
	\[f(t,p)< \dfrac{p}{K_2\,\int_a^b\varPhi(s)\,ds}\,,\quad \forall t\in I\,.\]
	\item[$(H_2^*)$] There exists $q>0$ such that $(H_2)$ is fulfilled and
	\[f\left( t,q\right) > \dfrac{K_2\,q}{K_1\,\int_{a_1}^{b_1}k_1(s)\,\varPhi(s)\,ds}\,,\quad \forall t\in I_1 \,.\]
\end{itemize}

\begin{remark}
	Realize that conditions $(H_1^*)$ and $(H_2^*)$ are a small restriction of $(H_1)$ and $(H_2)$, respectively. As we will see, this restriction allows us to prove that the two fixed points that we find are, in fact, different.
\end{remark}

\begin{theorem}
	\label{T::3}
		If $G$ satisfies $(Pg_1)$, then the operator $\mathcal L$ has at least two fixed points, $u_1$ and $u_2\in\mathcal P$, provided that $f_0^-(t)=f_\infty^-(t)=\infty$ for $t\in I$ and $(H_1^*)$ holds. In such a case, $0<\|u_1\|_\infty<p<\|u_2\|_\infty$ ($p$ given in $(H_1^*)$).
\end{theorem}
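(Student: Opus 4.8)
The plan is to apply the Krasnoselskiĭ Theorem (Theorem \ref{T::1}) three times on nested annuli, producing two fixed points that are kept apart by the strict inequalities in $(H_1^*)$. The assumption $f_0^-(t)=\infty$ gives an expanding estimate near the origin, the hypothesis $(H_1^*)$ gives a compressing estimate on the sphere of radius $p$, and $f_\infty^-(t)=\infty$ gives an expanding estimate at infinity. Since compression sits between two expansions, Theorem \ref{T::2} (the single-fixed-point consequence of Theorem \ref{T::1}) yields one fixed point with norm strictly below $p$ and another with norm strictly above $p$, and the strictness prevents them from coinciding at $\|u\|_\infty=p$.

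First I would record that $\mathcal L(\mathcal P)\subset\mathcal P$ and that $\mathcal L$ is completely continuous, exactly as established at the start of the proof of Theorem \ref{T::2}; this needs no repetition beyond a citation to that argument. Next I would translate the limit hypotheses into usable inequalities. From $f_0^-(t)=\liminf_{u\to0^+}f(t,u)/u=\infty$ for every $t\in I$, there is some small $p_1$ with $0<p_1<p$ such that
\[
f(t,u)\geqslant \frac{K_2^2}{K_1\,m_1\int_{a_1}^{b_1}k_1(s)\,\varPhi(s)\,ds}\,u\,,\qquad t\in I_1,\ 0<u\leqslant p_1,
\]
which, as in the proof of Corollary \ref{Cor::2.4}, means $(H_2)$ holds at the value $q_1=\tfrac{K_2}{m_1}\,p_1$ (so that the relevant interval $[\tfrac{m_1}{K_2}q_1,q_1]=[p_1\tfrac{\,}{\,},q_1]\subset(0,p_1]$ after shrinking $p_1$ if needed). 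Symmetrically, $f_\infty^-(t)=\infty$ produces a large $q_2$ at which $(H_2)$ holds, with the corresponding norm value exceeding $p$. The hypothesis $(H_1^*)$ supplies $(H_1)$ at $p$, with the extra \emph{strict} inequality $f(t,p)<p/(K_2\int_a^b\varPhi(s)\,ds)$.

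Then I would run Theorem \ref{T::2} twice: once with the compression radius $p$ from $(H_1)$ and the expansion radius $q_1<p$ from $(H_2)$, giving a fixed point $u_1$ with $\|u_1\|_\infty$ between $q_1$ and $p$; and once with the same $p$ against the larger expansion radius $q_2>p$, giving $u_2$ with $\|u_2\|_\infty$ between $p$ and $q_2$. This already yields $\|u_1\|_\infty\leqslant p\leqslant\|u_2\|_\infty$, but the main obstacle is showing the two are genuinely distinct rather than both equal to some fixed point of norm exactly $p$. Here the strict part of $(H_1^*)$ is essential: I would argue by contradiction that if $u$ is a fixed point with $\|u\|_\infty=p$, then for the $t^\ast$ attaining the maximum, $u(t^\ast)=p$ and the compression estimate \eqref{Ec::Des1} combined with the \emph{strict} inequality $f(s,u(s))\leqslant f(s,p)<p/(K_2\int_a^b\varPhi(s)\,ds)$ (using $0\leqslant u(s)\leqslant p$ and monotonicity/continuity, or at worst continuity to get strict inequality on a set of positive measure) forces $\|\mathcal L u\|_\infty<p=\|u\|_\infty$, contradicting $\mathcal L u=u$. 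Hence no fixed point has norm $p$, so $\|u_1\|_\infty<p<\|u_2\|_\infty$ and the two solutions are distinct, giving $0<\|u_1\|_\infty<p<\|u_2\|_\infty$ as claimed. The delicate point to handle carefully is that $(H_1^*)$ asserts strictness only at $u=p$, so I must ensure the passage from $f(s,u(s))$ to a strict bound is legitimate on the fixed point; invoking continuity of $f$ near the maximizing point, where $u$ is close to $p$, is the cleanest route.
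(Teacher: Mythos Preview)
Your proposal is correct and follows essentially the same route as the paper: extract expansion radii $q_1<p<q_2$ from $f_0^-=f_\infty^-=\infty$ (as in Corollary~\ref{Cor::2.4}), use $(H_1^*)$ for compression at $p$, apply Theorem~\ref{T::2} twice, and rule out a fixed point of norm exactly $p$ via the strict inequality and continuity of $f$ near the maximizer. The only cosmetic wrinkle is the opening line about applying Theorem~\ref{T::1} ``three times'' when you in fact invoke Theorem~\ref{T::2} twice, and the bookkeeping around $q_1=\tfrac{K_2}{m_1}p_1$ is a bit tangled; the paper simply asserts the existence of suitable $q_1,q_2$ by reference to the argument of Corollary~\ref{Cor::2.4}, which is cleaner.
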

\begin{proof}
As in 	Corollary \ref{Cor::2.4}, we can see that $f_0^-(t)=f_\infty^-(t)=\infty$ imply that there exist positive numbers $q_1$ and $q_2$, with $0<q_1<p<q_2$ such that $(H_2)$ is fulfilled with respect to $q_1$ and $q_2$, respectively.

Since $(H_1^*)$ is  a restriction of $(H_1)$ we have that, in particular, $(H_1)$ holds for $p$.

Hence, from Theorem \ref{T::2}, we conclude that there exist two fixed points, $u_1$ and $u_2$, such that $0<q_1\leqslant \|u_1\|_\infty\leqslant p\leqslant \|u_2\|_\infty\leqslant q_2$. 

To finish the proof, we need to ensure that $u_1$ and $u_2$ are, in fact, different. To this end, let us prove that if $u\in\mathcal P\cap\partial \Omega_p$, then $u$ cannot be a fixed point of $\mathcal L$.

Let $u\in\mathcal P\cap\partial \Omega_p$, that is, $\|u\|_\infty=p$. From $(H_1^*)$ and property $(Pg_1)$, we have:
\begin{equation}\label{Ec::Des3}\begin{split}
\|\mathcal L\,u\|_\infty&=\sup_{t\in I} \int_a^b G(t,s)\,f(s,u(s))\,ds\leqslant \sup_{t\in I}\int_a^bk_2(t)\,\varPhi(s)\,f(s,u(s))\,ds\\&=K_2\,\int_a^b\varPhi(s)\,f(s,u(s))\,ds\leqslant K_2\,\int_a^b\varPhi(s)\,\dfrac{p}{K_2\,\int_a^b\varPhi(s)\,ds}\,ds\\&=p=\|u\|_\infty\,.
\end{split}\end{equation}

Since $\|u\|_\infty=p$ we have that there exists $t_0\in I$ such that $u(t_0)=p$. Now, since $f$ is a continuous function and $\|u\|_\infty=p$, from $(H_1^*)$, there exists a neighborhood of $t_0$ such that the strict inequality given in $(H_1^*)$ is fulfilled, hence the last inequality in \eqref{Ec::Des3} is strict and  $\|\mathcal L\,u\|_\infty<\|u\|_\infty$. Thus,  $u$ cannot be a fixed point. So, there exist two fixed points, $u_1$ and $u_2$, such that $0<q_1\leqslant \|u_1\|_\infty< p<\|u_2\|_\infty$ and the result is proved.
\end{proof}

In an analogous way, we can prove the following ``dual'' result:
\begin{theorem}
	\label{T::4}
	If $G$ satisfies $(Pg_1)$, then the operator $\mathcal L$ has at least two fixed points, $u_1$ and $u_2\in\mathcal P$, provided that $f_0^+(t)=f_\infty^+(t)=0$ and $(H_2^*)$ holds. In such a case, $0<\|u_1\|_\infty<q<\|u_2\|_\infty$ ($q$ given in $(H_2^*)$).
\end{theorem}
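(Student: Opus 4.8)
The plan is to run the argument of Theorem~\ref{T::3} with the roles of $(H_1)$ and $(H_2)$, and of $p$ and $q$, interchanged; the genuine content is concentrated in the final distinctness step, which turns out \emph{not} to be a verbatim dual of Theorem~\ref{T::3}.

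First I would convert the asymptotic hypotheses into two instances of $(H_1)$. From $f_0^+(t)=0$ one obtains, exactly as in the opening of Corollary~\ref{Cor::2.4}, a small $p_1>0$ for which $f(t,u)\le u/(K_2\int_a^b\varPhi(s)\,ds)\le p_1/(K_2\int_a^b\varPhi(s)\,ds)$ on $[0,p_1]$, so $(H_1)$ holds at $p_1$; note that $f_0^+(t)=0$ also forces $f(t,0)=0$, so the estimate remains valid down to $u=0$. From $f_\infty^+(t)=0$ one obtains a large $p_2$ with $(H_1)$, and here I would reuse the bounded/unbounded dichotomy from the second half of Corollary~\ref{Cor::2.4} to promote the merely asymptotic control of $f(t,u)/u$ into the uniform bound on all of $[0,p_2]$ that $(H_1)$ demands. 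I would fix the labels so that $p_1<q<p_2$, where $q$ is the number furnished by $(H_2^*)$.

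Second, since $(H_2^*)$ contains $(H_2)$, the hypothesis $(H_2)$ holds at this $q$, and I would apply Theorem~\ref{T::2} twice: to the pair $(p_1,q)$, producing a fixed point $u_1$ with $p_1\le\|u_1\|_\infty\le q$, and to the pair $(q,p_2)$, producing a fixed point $u_2$ with $q\le\|u_2\|_\infty\le p_2$. All that then remains is to upgrade these weak inequalities at $q$ to strict ones, which at the same time guarantees $u_1\ne u_2$.

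The hard part is precisely this last step: showing that no $v\in\mathcal P$ with $\|v\|_\infty=q$ can be a fixed point. The temptation is to dualize \eqref{Ec::Des3} — pick $t_0$ with $v(t_0)=q$, invoke the strict inequality in $(H_2^*)$ on a neighbourhood of $t_0$, and conclude $\|\mathcal Lv\|_\infty>q$. This fails as stated, because the minorizing estimate \eqref{Ec::Des2} only feels $v$ on the subinterval $I_1$, whereas $(H_2^*)$ is a strict bound on $I_1$ at the single height $u=q$, and a fixed point with $\|v\|_\infty=q$ need not come near height $q$ on $I_1$. Indeed, if $v=\mathcal Lv$ with $\|\mathcal Lv\|_\infty=q$, then both ends of the chain used in Theorem~\ref{T::2} (inequality \eqref{Ec::Des2}, then $(H_2)$, then the cone bound) equal $q$, so every inequality in it is an equality; the cone equality together with $\varPhi>0$ on $(a,b)$ forces $v(s)=\frac{k_1(s)}{K_2}\,q$ for all $s\in I_1$, i.e. $v\le\frac{K_1}{K_2}q<q$ there, leaving the continuity argument nothing to act on. The way I would actually close the gap is to read off from the \emph{same} equality analysis that $\int_a^b\varPhi(s)f(s,v(s))\,ds=q/K_1$ (equality in \eqref{Ec::Des2}), while the pointwise bound $v(s)=\mathcal Lv(s)\ge k_1(s)\int_a^b\varPhi(s)f(s,v(s))\,ds$ combined with $v(s)=\frac{k_1(s)}{K_2}q$ on $I_1$ gives $\int_a^b\varPhi f\le q/K_2$; since $K_1\le K_2$ always holds, the two force $K_1=K_2$, a contradiction in the generic situation $K_1<K_2$. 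I expect the truly delicate point to be the borderline case $K_1=K_2$, where this comparison degenerates and one must instead extract strictness directly from the strict inequality of $(H_2^*)$; this is the step I would spend the most care on, and it is the most plausible reason the authors impose $(H_2^*)$ rather than only $(H_2)$.
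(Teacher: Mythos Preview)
The paper's own proof is the single sentence ``In an analogous way, we can prove the following `dual' result'', so the intended argument is simply the mirror of Theorem~\ref{T::3}; your first two steps (extracting $p_1<q<p_2$ satisfying $(H_1)$ from $f_0^+=f_\infty^+=0$ via the mechanism of Corollary~\ref{Cor::2.4}, then invoking Theorem~\ref{T::2} twice) reproduce that template exactly and are correct.

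Where you go further than the paper is in the distinctness step, and your diagnosis is right: it is \emph{not} a verbatim dual. In Theorem~\ref{T::3} the strict bound of $(H_1^*)$ and the integral in \eqref{Ec::Des3} both live on all of $I$, so any $t_0\in I$ with $u(t_0)=p$ triggers strictness; here the strict bound of $(H_2^*)$ and the minorising integral from \eqref{Ec::Des2} both live only on $I_1$, and a fixed point $v$ with $\|v\|_\infty=q$ need not reach height $q$ on $I_1$. (The Remark following Theorem~\ref{T::5} shows the authors are aware of exactly this asymmetry in a closely related setting.) Your equality-chain argument is valid and cleanly forces $K_1=K_2$: from the chain one gets $\int_a^b\varPhi f=q/K_1$, while the cone bound on $v=\mathcal L v$ restricted to $I_1$ gives $\int_a^b\varPhi f\le q/K_2$. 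Since every application in Section~5 has $K_1<K_2$ strictly, this closes the proof in all cases of interest and is more careful than what the paper supplies. The borderline $K_1=K_2$ that you flag is genuinely not covered---neither by your argument nor by the paper's one-line claim---so you have correctly isolated a real, if pathological, gap in the statement as written.
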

%

\section{Existence of multiple fixed points}

In this section,  by using Theorems \ref{T::1.1} and \ref{T::1.2}, we prove the existence of two or three non-trivial fixed points, respectively, of operator $\mathcal{L}$ defined in \eqref{Ec::OpInt}. We follow the steps given in \cite{AndHof}, however we impose slightly  weaker conditions on $f$ than the ones given in that reference.

\begin{theorem}
	\label{T::5}
	Suppose that there exist positive numbers $p$, $q$ and $r$ such that $0<p<q<r$, and assume that function $f$ satisfies the following conditions:
	\begin{enumerate}[$(i)$]
		\item $f(t,u)\geqslant \dfrac{u}{m_1\int_{a_1}^{b_1}\varPhi(s)\,ds}$ for all $t\in I_1$ and $u\in\left[ r,\dfrac{K_2}{m_1}\,r\right] $, being the inequality strict at $u=r$,
		\item $f(t,u)\leqslant\dfrac{q}{K_2\int_{a}^{b}\varPhi(s)\,ds}$ for all $t\in I$ and $u\in\left[ 0,\dfrac{K_2}{m_1}\,q\right] $, being the inequality strict at $u=q$,
		\item $f(t,u)>\dfrac{K_2\,u}{K_1\int_{a_1}^{b_1}k_1(s)\,\varPhi(s)\,ds}$ for all $t\in I_1$ and $u\in\left[ \dfrac{m_1}{K_2}\,p,p\right] $.
	\end{enumerate}

Then, if $G$ satisfies $(Pg_1)$, the operator $\mathcal L$ has at least two fixed points, $u_1$ and $u_2$, such that \[p<\|u_1\|_\infty\,,\quad \max_{t\in I_1}u_1(t)<q<\max_{t\in I_1}u_2(t)\,,\quad \min_{t\in I_1}u_2(t)<r\,.\]
\end{theorem}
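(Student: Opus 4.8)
The plan is to apply the Avery--Henderson two fixed point theorem (Theorem \ref{T::1.1}) with the three functionals
\[\alpha(u)=\min_{t\in I_1}u(t)\,,\qquad \theta(u)=\max_{t\in I_1}u(t)\,,\qquad \gamma(u)=\|u\|_\infty=\max_{t\in I}u(t)\,,\]
defined on the cone $\mathcal P$, letting $p<q<r$ play the roles of the corresponding constants in that theorem. First I would verify the structural requirements. Since $I_1\subset I$ and every $u\in\mathcal P$ is nonnegative, the chain $\alpha(u)\leqslant\theta(u)\leqslant\gamma(u)$ is immediate, $\alpha$ and $\gamma$ are increasing, $\theta(0)=0$, and $\theta$ is positively homogeneous, so $\theta(\lambda u)=\lambda\,\theta(u)\leqslant\lambda\,\theta(u)$ for $0\leqslant\lambda\leqslant1$. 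Evaluating the defining inequality of $\mathcal P$, namely $u(t)\geqslant\frac{k_1(t)}{K_2}\|u\|_\infty$, on $I_1$ gives $\alpha(u)\geqslant\frac{m_1}{K_2}\|u\|_\infty$, i.e. $\|u\|_\infty\leqslant\frac{K_2}{m_1}\alpha(u)$, so the constant $M=K_2/m_1$ works. That $\mathcal L\colon\overline{\mathcal P(\alpha,r)}\to\mathcal P$ is completely continuous is already contained in the proof of Theorem \ref{T::2}.

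The core of the argument is to translate hypotheses $(i)$, $(ii)$, $(iii)$ into conditions $i)$, $ii)$, $iii)$ of Theorem \ref{T::1.1}. For $ii)$ (resp.\ $iii)$) I would take $u\in\partial\mathcal P(\theta,q)$ (resp.\ $u\in\partial\mathcal P(\gamma,p)$) and use the membership bound to locate $u(s)$ in the interval where the hypothesis is assumed: from $\|u\|_\infty\leqslant\frac{K_2}{m_1}\theta(u)=\frac{K_2}{m_1}q$ one gets $u(s)\in[0,\frac{K_2}{m_1}q]$ for all $s\in I$, while $\gamma(u)=p$ together with the cone bound gives $u(s)\in[\frac{m_1}{K_2}p,p]$ on $I_1$. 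Then I would estimate $\mathcal L u$ exactly as in the proof of Theorem \ref{T::2}: the upper bound $G(t,s)\leqslant k_2(t)\varPhi(s)$ together with $(ii)$ yields $\theta(\mathcal L u)\leqslant K_2\int_a^b\varPhi(s)\,f(s,u(s))\,ds<q$, whereas the lower bound $G(t,s)\geqslant k_1(t)\varPhi(s)$, which gives $\|\mathcal L u\|_\infty\geqslant K_1\int_a^b\varPhi(s)\,f(s,u(s))\,ds$, together with $(iii)$ and $u(s)\geqslant\frac{k_1(s)}{K_2}p$ yields $\gamma(\mathcal L u)=\|\mathcal L u\|_\infty>p$; nonemptiness of $\mathcal P(\gamma,p)$ holds since $0\in\mathcal P(\gamma,p)$.

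For condition $i)$ I would take $u\in\partial\mathcal P(\alpha,r)$, so $\min_{t\in I_1}u(t)=r$ and hence $u(s)\in[r,\frac{K_2}{m_1}r]$ for $s\in I_1$. There $(i)$ gives $f(s,u(s))\geqslant\frac{u(s)}{m_1\int_{a_1}^{b_1}\varPhi(\sigma)\,d\sigma}\geqslant\frac{r}{m_1\int_{a_1}^{b_1}\varPhi(\sigma)\,d\sigma}$, and, combined with the strictness assumed at $u=r$, this inequality is in fact strict at every $s\in I_1$ (it is strict wherever $u(s)>r$ and strict by hypothesis wherever $u(s)=r$); integrating against $k_1(t)\varPhi(s)$ and using $k_1(t)\geqslant m_1$ on $I_1$ gives $\mathcal L u(t)>r$ for all $t\in I_1$, whence $\alpha(\mathcal L u)>r$. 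The step I expect to be most delicate is obtaining the strict inequalities, rather than merely $\geqslant r$ and $\leqslant q$, in $i)$ and especially $ii)$: for $ii)$ strictness is not pointwise everywhere, so I would argue as in the proof of Theorem \ref{T::3}, using that $\max_{t\in I_1}u(t)=q$ is attained at some $s_0\in I_1$ and that the inequality in $(ii)$ is strict at $u=q$, so that continuity of $f$ and $u$ forces $f(s,u(s))<\frac{q}{K_2\int_a^b\varPhi(\sigma)\,d\sigma}$ on a neighbourhood of $s_0$ of positive measure on which $\varPhi>0$, making the integral estimate strict. With $i)$, $ii)$, $iii)$ verified, Theorem \ref{T::1.1} produces two fixed points $u_1,u_2$ with $p<\gamma(u_1)$, $\theta(u_1)<q<\theta(u_2)$ and $\alpha(u_2)<r$, which is precisely the asserted conclusion once the definitions of $\alpha$, $\theta$, $\gamma$ are unravelled.
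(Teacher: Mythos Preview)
Your proposal is correct and follows essentially the same route as the paper: the same choice of functionals $\alpha$, $\theta$, $\gamma$, the same cone estimate $\|u\|_\infty\leqslant\frac{K_2}{m_1}\alpha(u)$, and the same integral estimates via $(Pg_1)$ to verify the three conditions of Theorem~\ref{T::1.1}. The only noticeable variation is in the strictness argument for condition~$i)$: you obtain $f(s,u(s))>\frac{r}{m_1\int_{a_1}^{b_1}\varPhi}$ pointwise on $I_1$ (strict from the hypothesis where $u(s)=r$, and strict from $u(s)>r$ elsewhere), whereas the paper instead isolates a subinterval $I_0\subset I_1$ around a point where $u=r$ and invokes continuity there; your version is marginally cleaner but the two are equivalent in effect.
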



\begin{proof}
	The proof is based on Theorem \ref{T::1.1}.
	
	Consider:
	\begin{align}
	\label{Ec::alpha}\alpha(u)&:=\min_{t\in I_1}u(t)\,,\\
	\label{Ec::theta} \theta(u)&:=\max_{t\in I_1}u(t)\,,\\
\nonumber	\text{and\qquad}&\hspace{11cm}\\
	\label{Ec::gamma}\gamma(u)&:=\|u\|_\infty\,.
	\end{align}
	
	Clearly, $\alpha(u)\leqslant \theta(u)\leqslant \gamma(u)$ for all $u\in\mathcal P$.
	
	Since $u\in \mathcal P$, then
	\[\alpha(u)=\min_{t\in I_1}u(t)\geqslant \min_{t\in I_1} \dfrac{k_1(t)}{K_2}\,\|u\|_\infty=\dfrac{m_1}{K_2}\,\gamma(u)\,,\]
	that is, $\gamma(u)\leqslant \dfrac{K_2}{m_1}\,\alpha(u)$ for all $u\in\mathcal P$.
	
	Moreover, for all $\lambda\in \mathbb{R}$ and $u\in\mathcal P$, we have:	
	\[\theta(\lambda\,u)=\max_{t\in I_1}\left\lbrace \lambda\,u(t)\right\rbrace =\lambda\,\max_{t\in I_1}u(t)=\lambda\,\theta(u)\,.\]
	
	As we have noticed, $\mathcal L\colon \mathcal P\longrightarrow\mathcal P$ is a completely continuous operator.
	
	Moreover, let us consider $u\in \partial \mathcal P(\alpha,r)$, i.e. $\min_{t\in I_1}u(t)=r$. we have that \[\alpha(u)=r\geqslant \dfrac{m_1}{K_2}\|u\|_\infty\,.\]
	
	Thus, from $(i)$, the following inequalities are verified:
{\small 	\begin{equation}\label{Ec::Des4}\begin{split}
	\alpha(\mathcal L\,u)&=\min_{t\in I_1}\int_a^bG(t,s)\,f(s,u(s))\,ds\geqslant \min_{t\in I_1}\int_a^bk_1(t)\,\varPhi(s)\,f(s,u(s))\,ds\\&\geqslant \min_{t\in I_1} k_1(t)\int_{a_1}^{b_1}\varPhi(s)\,f(s,u(s))\,ds\geqslant m_1\int_{a_1}^{b_1}\varPhi(s)\,\dfrac{u(s)}{m_1\,\int_{a_1}^{b_1}\varPhi(s)\,ds}\,ds\,.
	\end{split}\end{equation}}	

On the other hand	$\alpha(u)=r$ implies that there exists $t_1\in I_1$ such that $u(t_1)=r$. Since $f$ is a continuous function on $I_1\subset I$, from $(i)$, there exist $I_0\subset I_1$ a nontrivial subinterval where the inequality given on $(i)$  is strict. Thus, the last inequality of \eqref{Ec::Des4} is also strict and we have for all $u\in \partial \mathcal{P}(\alpha,r)$:
	\[\alpha(\mathcal L\,u)> m_1\int_{a_1}^{b_1}\varPhi(s)\,\dfrac{u(s)}{m_1\,\int_{a_1}^{b_1}\varPhi(s)\,ds}\,ds\geqslant r\,.\]

	Now, for $u\in \partial \mathcal P(\theta, q)$, i.e. $\max_{t\in I_1}u(t)=q$, we have that \[\gamma(u)\geqslant\theta (u)=q\geq\alpha(u)\geqslant \dfrac{m_1}{K_2}\gamma(u)\,.\]
	
	 Thus, $q\leqslant \|u\|_\infty\leqslant \dfrac{K_2}{m_1}q$ and from $(ii)$, we have:
	\begin{equation}\label{Ec::Des5}\begin{split}
	\theta (\mathcal L\,u)&=\max_{t\in I_1}\int_a^b G(t,s)\,f(s,u(s))\,ds\leqslant \max_{t\in I_1}\int_a^bk_2(t)\,\varPhi(s)\,f(s,u(s))\,ds\\&\leqslant K_2\,\int_a^b\varPhi(s)\,f(s,u(s))\,ds\leqslant K_2\int_a^b\varPhi(s)\dfrac{q}{K_2\int_a^b\varPhi(s)\,ds}\,ds\,.
	\end{split}\end{equation}
	
		Repeating the previous arguments, $\theta(u)=q$ implies that there exists $t_1\in I_1\subset I$ such that $u(t_1)=q$. Since $f$ is a continuous function on $I$, from $(ii)$, there exist $I_0\subset I_1$ a nontrivial subinterval where the inequality given on $(ii)$  is strict. Thus, the last inequality of \eqref{Ec::Des5} is also strict and we have for all $u\in\partial \mathcal P(\theta,q)$:
	\[\theta(\mathcal L\,u)< K_2\int_a^b\varPhi(s)\dfrac{q}{K_2\int_a^b\varPhi(s)\,ds}\,ds=q\,.\]

	Finally, $\mathcal P(\gamma,p)=\{u\in\mathcal P\ \mid \|u\|_\infty<p\}\neq \emptyset$ and for all $u\in \partial\mathcal P(\gamma,p)$, we have that $\|u\|_\infty=p$ and $\alpha(u)\geqslant \frac{m_1}{K_2}\,p$. 
	
	Thus, from $(iii)$, we obtain:
		\[\begin{split}
	\gamma(\mathcal L\,u)&=\sup_{t\in I}\int_a^bG(t,s)\,f(s,u(s))\,ds\geqslant \sup_{t\in I}\int_a^bk_1(t)\,\varPhi(s)\,f(s,u(s))\,ds\\&\geqslant K_1\int_{a_1}^{b_1}\varPhi(s)\,f(s,u(s))\,ds\geqslant K_1\int_{a_1}^{b_1}\,\varPhi(s)\,\dfrac{K_2\,u(s)}{K_1\,\int_{a_1}^{b_1}k_1(s)\varphi(s)\,ds}\\&\geqslant \int_{a_1}^{b_1}\varPhi(s)\,\dfrac{K_2\,\frac{k_1(s)}{K_2}\,\|u\|_\infty}{\int_{a_1}^{b_1}k_1(s)\,\varPhi(s)\,ds}\,ds=p\,.
	\end{split}\]
	
	So, we conclude that $\gamma(\mathcal L\,u)>p$ for $u\in \partial \mathcal P(\gamma,p)$.
	
	Hence, all the hypotheses of Theorem \ref{T::1.1} are fulfilled. Thus, $\mathcal L$ has at least two fixed points on $\mathcal P$, $u_1$ and $u_2$, such that $p<\gamma(u_1)=\|u\|_\infty$ and $q>\theta(u_1)=\max_{t\in I_1}u_1(t)$. Moreover, $q<\theta(u_2)=\max_{t\in I_1}u_2(t)$ and $r>\alpha(u_2)=\min_{t\in I_1} u_2(t)$ and the result is proved.	
\end{proof}

\begin{remark}
	Realize that in the third item of Theorem \ref{T::5}, we cannot avoid the strict inequality on the whole interval, since $\|u\|_\infty=p$ implies that there exists $t_1\in I$ such that $u(t_1)=p$, however we cannot ensure that $p\in I_1$.
	
	Even though, in most of cases it is fulfilled that the point $t_1\in I$ for which $u(t_1)=p$ belongs to $I_1$. This is due because $I_1$ is built to avoid having zeros or small values of $u$. It is convenient to have $I_1$ as larger as possible to ensure that the obtained bounds are better, so $t_1$ is likely to be in $I_1$. In such a case, we can ask for the strict inequality only at $u=p$.
\end{remark}

Now, as an application of Theorem \ref{T::1.2}, we obtain the next result that ensures  the existence of at least three critical points of operator $\mathcal L$.

\begin{theorem}
	\label{T::6}
	Let $p$, $q$ and $r$ be positive numbers satisfying the relation: 
	\[0<p<q<\dfrac{K_2}{m_1}q\leqslant r\,.\]

Assume, moreover, that the function $f$ satisfies the following conditions:
	\begin{enumerate}[$(a)$]
			\item $f(t,u)\leqslant\dfrac{r}{K_2\int_{a}^{b}\varPhi(s)\,ds}$ for all $t\in I$ and $u\in\left[ 0,r\right] $,
		\item $f(t,u)<\dfrac{p}{K_2\int_{a}^{b}\varPhi(s)\,ds}$ for all $t\in I$ and $u\in\left[ 0,p\right] $,
		\item $f(t,u)\geqslant\dfrac{u}{m_1\int_{a_1}^{b_1}\varPhi(s)\,ds}$ for all $t\in I_1$ and $u\in\left[ q,\dfrac{K_2}{m_1}q\right] $, being the inequality strict for $u=q$.
	\end{enumerate}

Then, if $G$ satisfies $(Pg_1)$, the operator $\mathcal L$ has at least three fixed points $u_1$, $u_2$, $u_3\in \{u\in\mathcal P\ \mid\ \|u\|_\infty\leqslant r\}$ such that  $\max_{t\in I_1}u_1(t)<p$, $q<\min_{t\in I_1}u_2(t)$ and $p<\max_{t\in I_1}u_3(t)$ with $\min_{t\in I_1}u_3(t)<q$.
\end{theorem}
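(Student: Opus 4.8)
The plan is to deduce the statement from Avery's three fixed point Theorem \ref{T::1.2}, reproducing on the appropriate sets the estimates already used in Theorems \ref{T::2} and \ref{T::5}. I would work with the functionals $\gamma(u)=\|u\|_\infty$, $\alpha(u)=\psi(u)=\min_{t\in I_1}u(t)$ and $\theta(u)=\max_{t\in I_1}u(t)$, all continuous, with $\alpha,\psi$ concave and $\gamma,\theta$ convex on $\mathcal P$; the remaining convex functional $\beta$ is the one whose choice is delicate (see below). The parameters of Theorem \ref{T::1.2} would be matched to those of the statement by $d=p$, $p_{\text{Av}}=q$, $q_{\text{Av}}=\frac{K_2}{m_1}\,q$ and $r_{\text{Av}}=r$, with $h\in(0,p)$ chosen small; the admissibility $0<d<p_{\text{Av}}$ is exactly $0<p<q$, while $q_{\text{Av}}=\frac{K_2}{m_1}q\le r$ is guaranteed by the hypothesis $q<\frac{K_2}{m_1}q\le r$.

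Before checking the four numbered hypotheses I would record the structural facts. Since $\gamma=\|\cdot\|_\infty$, the inequality $\|u\|\le M\,\gamma(u)$ holds with $M=1$, and $\alpha(u)\le\beta(u)$ is immediate for nonnegative $u\in\mathcal P$. The self-map property $\mathcal L\colon\overline{\mathcal P(\gamma,r)}\to\overline{\mathcal P(\gamma,r)}$ follows from condition $(a)$: for $\|u\|_\infty\le r$ one has $0\le u(s)\le r$, so exactly as in \eqref{Ec::Des1}, $\|\mathcal L u\|_\infty\le K_2\int_a^b\varPhi(s)f(s,u(s))\,ds\le r$. That $\mathcal L$ is completely continuous and $\mathcal L(\mathcal P)\subset\mathcal P$ was already established in the proof of Theorem \ref{T::2}.

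Then I would verify the hypotheses $(a)$--$(d)$ of Theorem \ref{T::1.2}. For the lower $\alpha$-estimates I would use $(c)$ of the statement: on the set where $q\le\alpha(u)$ and $\theta(u)\le\frac{K_2}{m_1}q$ one has $u(s)\in[q,\frac{K_2}{m_1}q]$ for $s\in I_1$, so the chain of inequalities in \eqref{Ec::Des4} gives $\alpha(\mathcal L u)\ge\frac{\int_{a_1}^{b_1}\varPhi(s)u(s)\,ds}{\int_{a_1}^{b_1}\varPhi(s)\,ds}\ge q$, the strictness at $u=q$ upgrading this to $\alpha(\mathcal L u)>q$; this settles $(a)$, nonemptiness being witnessed by a suitable constant function. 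Hypothesis $(c)$ is purely structural: if $\theta(\mathcal L u)>\frac{K_2}{m_1}q$ then, since $\mathcal L u\in\mathcal P$, $\alpha(\mathcal L u)=\min_{I_1}\mathcal L u\ge\frac{m_1}{K_2}\|\mathcal L u\|_\infty\ge\frac{m_1}{K_2}\theta(\mathcal L u)>q$. For the upper $\beta$-estimates $(b)$ and $(d)$ I would use condition $(b)$ of the statement, which bounds $f$ on $[0,p]$; here, to make $f(s,u(s))<\frac{p}{K_2\int\varPhi}$ available for every $s\in I$ (the integral defining $\mathcal L u$ runs over all of $I$), the natural choice is $\beta=\gamma=\|\cdot\|_\infty$, so that the relevant sets force $u(s)\le p$ for all $s$ and \eqref{Ec::Des3}-type reasoning yields $\beta(\mathcal L u)=\|\mathcal L u\|_\infty<p=d$, again with constants supplying nonemptiness and the extra requirement $\psi(\mathcal L u)<h$ in $(d)$ left unused.

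The main obstacle is precisely this upper estimate and the attendant choice of $\beta$. Because property $(Pg_1)$ and membership in $\mathcal P$ only control $u$ from below on $I_1$, the values of $u$ on $I\setminus I_1$ are not governed by $\max_{t\in I_1}u(t)$; hence taking $\beta=\max_{t\in I_1}$ would leave $f(s,u(s))$ for $s\notin I_1$ bounded only by the larger constant $\frac{r}{K_2\int\varPhi}$ of $(a)$, and the inequality $\beta(\mathcal L u)<p$ would fail whenever $\int_{I\setminus I_1}\varPhi>0$. This is why $\beta$ must be the sup-norm, and why the conclusions for $u_1$ and $u_3$ produced by $\beta$ are most naturally read in terms of $\|\cdot\|_\infty$ (so that $\max_{t\in I_1}u_1(t)\le\|u_1\|_\infty<p$). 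Once $(a)$--$(d)$ are in place, Theorem \ref{T::1.2} delivers three fixed points with $\beta(u_1)<p$, $q<\alpha(u_2)$ and $p<\beta(u_3)$ together with $\alpha(u_3)<q$; translating through the functionals gives the asserted location of $u_1$, $u_2$ and $u_3$, the strict inequalities appearing in $(b)$ and $(c)$ being exactly what guarantees that the three points are genuinely distinct.
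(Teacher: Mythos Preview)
Your argument follows the paper's overall scheme (apply Theorem~\ref{T::1.2} with $\gamma=\|\cdot\|_\infty$, $\alpha=\psi=\min_{I_1}$, $\theta=\max_{I_1}$, and $h=\frac{m_1}{K_2}p$, $d=p$, $p_{\mathrm{Av}}=q$, $q_{\mathrm{Av}}=\frac{K_2}{m_1}q$) but diverges from it at exactly the point you flag: the paper takes $\beta=\theta=\max_{t\in I_1}$, not $\beta=\gamma=\|\cdot\|_\infty$. With that choice the paper verifies hypothesis~$d)$ of Theorem~\ref{T::1.2} by the purely structural inequality
\[
\beta(\mathcal L u)=\max_{t\in I_1}\mathcal L u(t)\leqslant\frac{K_2}{m_1}\,\min_{t\in I_1}\mathcal L u(t)=\frac{K_2}{m_1}\,\Psi(\mathcal L u)<\frac{K_2}{m_1}\cdot\frac{m_1}{K_2}p=p,
\]
so, unlike in your version, the condition $\Psi(\mathcal L u)<h$ is genuinely used. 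The payoff is that Avery's conclusion $d<\beta(u_3)$ then reads $p<\max_{t\in I_1}u_3(t)$, which is precisely the statement of the theorem; your choice $\beta=\|\cdot\|_\infty$ only yields the weaker $p<\|u_3\|_\infty$, so as written you have not established the announced localisation of $u_3$.

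Your worry about $\beta=\max_{I_1}$ is, however, well founded, and it applies to the paper's own verification of hypothesis~$b)$: for $u\in\mathcal P(\gamma,\beta,\Psi,\tfrac{m_1}{K_2}p,p,r)$ one only knows $\max_{I_1}u\leqslant p$ and $\|u\|_\infty\leqslant r$, so $u(s)$ may exceed $p$ for $s\in I\setminus I_1$ and condition~$(b)$ of the theorem does not bound $f(s,u(s))$ there. The paper glosses over this, while your $\beta=\|\cdot\|_\infty$ forces $u(s)\leqslant p$ on all of $I$ and makes the estimate clean. In short: the paper's route gives the sharper conclusion but has a gap in checking~$b)$; your route repairs that gap at the cost of weakening the conclusion on $u_3$. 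If you want both, you would need either an additional hypothesis controlling $f$ on $[0,r]$ near the $p$-level, or a two-step argument combining the structural bound for~$d)$ with a sup-norm bound for~$b)$.
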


\begin{proof}
The  proof follows from Theorem \ref{T::1.2}. 

We consider $\alpha$, $\theta$ and $\gamma$ as in \eqref{Ec::alpha}--\eqref{Ec::gamma} and, moreover $\Psi(u)=\alpha(u)$ and $\beta(u)=\theta(u)$. Clearly $\alpha$ and $\Psi$ are concave and non-negative functionals in $\mathcal P$. And $\beta$, $\theta$ and $\gamma$ are convex	and non-negative functionals in $\mathcal P$.

It is proved in Theorem \ref{T::2} that $\mathcal L(\mathcal P)\subset \mathcal P$. 

Let us see now that $\mathcal L\left( \overline{\mathcal P(\gamma,r)}\right) \subset \overline{\mathcal P(\gamma,r)}$. Indeed, let $u\in \overline{\mathcal P(\gamma,r)}$ (i.e. $\|u\|_\infty\leqslant r$), from $(a)$ we have: 
\[\begin{split}
\left\| \mathcal L\,u\right\| _\infty&=\sup_{t\in I}\int_a^bG(t,s)\,f(s,u(s))\,ds\leqslant \sup_{t\in I}\int_a^bk_2(t)\,\varPhi(s)\,f(s,u(s))\,ds\\&=K_2\int_a^b\varPhi(s)\,f(s,u(s))\,ds\leqslant K_2\int_a^b\varPhi(s)\,\dfrac{r}{K_2\int_a^b\varPhi(s)\,ds}\,ds=r\,,
\end{split}\]
thus, $\mathcal L\,u\in \overline{\mathcal P(\gamma,r)}$ and we conclude that $\mathcal L\left( \overline{\mathcal P(\gamma,r)}\right) \subset \overline{\mathcal P(\gamma,r)}$.

Obviously, $\alpha(u)\leqslant \beta(u)$ and $\gamma(u)=\|u\|_\infty$.

We consider $u_q(t)=\dfrac{K_2}{m_1}\,q$, it is obvious that $u_q$ belongs to the following set $\left\lbrace u\in \mathcal P\left( \gamma,\theta,\alpha,q,\dfrac{K_2}{m_1}\,q,r\right) \ \mid \alpha(u)>q\right\rbrace$, described as follows:
\[\left\lbrace u\in\mathcal P\ \mid q<\min_{t\in I_1}u(t)\,,\ \max_{t\in I_1}u(t)\leqslant \dfrac{K_2}{m_1}q\,,\ \|u\|_\infty\leqslant r\right\rbrace\neq \emptyset\,. \]

Let $u\in \mathcal P\left( \gamma,\theta,\alpha,q,\dfrac{K_2}{m_1}\,q,r\right) $, then using $(c)$, we obtain:
\begin{equation}\label{Ec::alfa}\begin{split}
\alpha(\mathcal L\,u)&=\min_{t\in I_1}\int_a^bG(t,s)\,f(s,u(s))\,ds\geqslant \min_{t\in I_1}\int_a^bk_1(t)\,\varPhi(s)\,f(s,u(s))\,ds\\&\geqslant m_1\int_{a_1}^{b_1}\varPhi(s)\,f(s,u(s))\,ds\geqslant m_1\int_{a_1}^{b_1}\varPhi(s)\,\dfrac{u(s)}{m_1\int_{a_1}^{b_1}\varPhi(s)\,ds}\,ds\,.
\end{split}\end{equation}

If there exists $s_1\in I_1$ such that $u(s_1)>q$, then it must exist a nontrivial subinterval of $I_1$ where $u(s)>q$ for all $s$ in such a subinterval. Then, directly from \eqref{Ec::alfa}, we have $\alpha (\mathcal L\,u)>q$.

Now, if $u(s)=q$ for all $s\in I_1$, from $(c)$ and \eqref{Ec::alfa}, we obtain:
\begin{equation*}
\alpha(\mathcal L\,u)\geqslant m_1\int_{a_1}^{b_1}\varPhi(s)\,f(s,q)\,ds> m_1\int_{a_1}^{b_1}\varPhi(s)\,\dfrac{q}{m_1\int_{a_1}^{b_1}\varPhi(s)\,ds}\,ds=q\,.
\end{equation*}

So, the hypothesis $a)$ on Theorem \ref{T::1.2} is fulfilled. Now, let us see $b)$.

If we consider the function $u_p(t)=\dfrac{m_1}{K_2}\,p$, it is clear that it belongs to the  set $\left\lbrace u\in \mathcal P\left( \gamma,\beta,\Psi,\dfrac{m_1}{K_2}\,p,p,r\right) \ \mid \beta(u)<p\right\rbrace$, which is described as follows:
\[\left\lbrace u\in\mathcal P\ \mid \dfrac{m_1}{K_2}\,p\leqslant\min_{t\in I_1}u(t)\,,\ \max_{t\in I_1}u(t)< p\,,\ \|u\|_\infty\leqslant r\right\rbrace\neq \emptyset\,. \]

Let $u\in \mathcal P\left( \gamma,\beta,\Psi,\dfrac{m_1}{K_2}\,p,p,r\right) $, from $(b)$, we have

\[\begin{split}
\beta(\mathcal L\,u)&=\max_{t\in I_1}\int_a^bG(t,s)\,f(s,u(s))\,ds\leqslant \max_{t\in I_1}\int_a^bK_2\,\varPhi(s)\,f(s,u(s))\,ds\\&<K_2\int_a^b\varPhi(s)\,\dfrac{p}{K_2\int_a^b\varPhi(s)\,ds}\,ds=p\,.
\end{split}\]

Thus, $\beta(\mathcal L\,u)<p$ for all $u\in \mathcal P\left( \gamma,\beta,\Psi,\dfrac{m_1}{K_2}\,p,p,r\right) $ and, as consequence, condition $b)$ in Theorem \ref{T::1.2} is satisfied.

Now, let $u\in \mathcal P(\gamma,\alpha,q,r)$ be such that $\theta (\mathcal L\,u)>q$. Then
\[\begin{split}
\alpha(\mathcal L\,u)&=\min_{t\in I_1} \int_a^b G(t,s)\,f(s,u(s))\,ds\geqslant \min_{t\in I_1}\int_a^b k_1(t)\,\varPhi(s)\,f(s,u(s))\,ds\\&=\dfrac{m_1}{K_2}\int_a^bK_2\,G(t,s)\,f(s,u(s))\,ds\geqslant \dfrac{m_1}{K_2}\int_a^b\max_{t\in I_1}G(t,s)\,f(s,u(s)\,ds\\&\geqslant\dfrac{m_1}{K_2}\max_{t\in I_1}\int_a^bG(t,s)\,f(s,u(s))\,ds=\dfrac{m_1}{K_2}\theta(\mathcal L\,u)>\dfrac{m_1}{K_2}\,q\,.
\end{split}\]

Finally, take $u\in \mathcal Q(\gamma,\beta,p,r)$ such that $\Psi(\mathcal{L}\,u)<\dfrac{m_1}{K_2}p$, then
\[\begin{split}
\beta(\mathcal L\,u)&=\max_{t\in I_1} \int_a^b G(t,s)\,f(s,u(s))\,ds\leqslant \int_a^b K_2\,\varPhi(s)\,f(s,u(s))\,ds\\&=\dfrac{K_2}{m_1}\int_a^bm_1\,\varPhi(s)\,f(s,u(s))\,ds\leqslant \dfrac{K_2}{m_1}\int_a^b\min_{t\in I_1}G(t,s)\,f(s,u(s)\,ds\\&\leqslant\dfrac{K_2}{m_1}\min_{t\in I_1}\int_a^bG(t,s)\,f(s,u(s))\,ds=\dfrac{K_2}{m_1}\Psi(\mathcal L\,u)<\dfrac{K_2}{m_1}\dfrac{m_1}{K_2}\,p=p\,.
\end{split}\]

Therefore, all the hypotheses of Theorem \ref{T::1.2} are fulfilled and we have ensured the existence of at least three critical points such that $p>\beta(u_1)=\max_{t\in I_1}u_1(t)$, $q<\alpha(u_2)=\min_{t\in I_1}u_2(t)$ and $p<\beta(u_3)=\max_{t\in I_1}u_3(t)$ with $q>\alpha(u_3)=\min_{t\in I_1}u_3(t)$. And the proof is complete.
\end{proof}

\section{Existence results for $(k,n-k)$ problems}

Let us consider the $n^{\mathrm{th}}-$order differential operator introduced in \eqref{Ec::Tn}.

As we have mentioned before, the existence of solution of the problem \eqref{Ec::Dif}-\eqref{Ec::BC} is equivalent to the existence of a fixed point of the integral operator \eqref{Ec::OpIn}. 

Let us denote
\[X_k=\{u\in C^n(I)\ \mid\ u(a)=\cdots=U^{(k-1)}(a)=u(b)=\cdots=u^{(n-k-1)}(b)=0\}\,,\]
the set of functions related to the boundary conditions \eqref{Ec::BC}.

Let us introduce a concept about the maximum number of zeros of the solutions of a linear differential equation.
\begin{definition}\cite[Chapter 0, Definition 1]{Cop}
	Let $p_k\in C^{n-k}(I)$ for $k=1,\dots,n$. The $n^{\mathrm{th}}$-order linear differential equation $T_n[\bar{M}]\,u(t)=0$  is said to be disconjugate on an interval $I$ if every non trivial solution has, at most, $n-1$ zeros on $I$, multiple zeros being counted according to their multiplicity.
\end{definition}

In \cite{CabSaa}, the main properties of te related Green's function, $g_M(t,s)$, are studied.  Now, we prove:
	\begin{theorem}\label{T::7}
	Let $\bar{M}\in\mathbb{R}$ be such that equation $T_n[\bar{M}]\,u(t)=0$  is disconjugate on $I$. Then the  following properties are fulfilled:\\
	
	If $n-k$ is even and $2\leqslant k \le n-1$, then $g_M(t,s)$ verifies $(Pg_1)$ if, and only if, $M\in (\bar{M}-\lambda_1,\bar{M}-\lambda_2]$, where:
	\begin{itemize}
		\item $\lambda_1>0$ is the least positive eigenvalue  of operator $T_n[\bar{M}]$ in $X_k$.
		\item $\lambda_2<0$ is the maximum of:
		\begin{itemize}
			\item $\lambda_2'<0$, the biggest negative eigenvalue  of operator $T_n[\bar{M}]$ in $X_{k-1}$.
			\item $\lambda_2''<0$, the biggest negative eigenvalue of operator $T_n[\bar{M}]$ in $X_{k+1}$.\\
		\end{itemize}
	\end{itemize}
	
	If $k=1$ and $n$ is odd, then  $g_M(t,s)$ verifies $(Pg_1)$ if, and only if, $M\in (\bar{M}-\lambda_1,\bar{M}-\lambda_2]$, where:
	\begin{itemize}
		\item $\lambda_1>0$ is the least positive eigenvalue  of operator $T_n[\bar{M}]$ in $X_1$.
		\item $\lambda_2<0$ is the biggest negative eigenvalue of operator $T_n[\bar{M}]$ in $X_{2}$.\\
		
	\end{itemize}
	
	If $n-k$ is odd and $2\leqslant k\leqslant n-2$, then  $-g_M(t,s)$ verifies $(Pg_1)$ if, and only if, $M\in [\bar{M}-\lambda_2,\bar{M}-\lambda_1)$, where:
	\begin{itemize}
		\item $\lambda_1<0$ is the biggest negative eigenvalue of operator $T_n[\bar{M}]$ in $X_k$.
		\item $\lambda_2>0$ is the minimum of:
		\begin{itemize}
			\item $\lambda_2'>0$, the least positive eigenvalue  of operator $T_n[\bar{M}]$ in $X_{k-1}$.
			\item $\lambda_2''>0$, the least positive eigenvalue  of operator $T_n[\bar{M}]$ in $X_{k+1}$.\\
		\end{itemize}
	\end{itemize}
	
	If $k=1$ and $n>2$ is even , then  $-g_M(t,s)$ verifies $(Pg_1)$ if, and only if, $M\in [\bar{M}-\lambda_2,\bar{M}-\lambda_1)$, where:
	\begin{itemize}
		\item $\lambda_1<0$ is the biggest negative eigenvalue of operator $T_n[\bar{M}]$ in $X_1$.
		\item $\lambda_2>0$ is  the least positive eigenvalue  of operator $T_n[\bar{M}]$ in $X_{2}$.\\
	\end{itemize}
	
	If $k=n-1$ and $n>2$, then  $-g_M(t,s)$ verifies $(Pg_1)$ if, and only if,  $M\in [\bar{M}-\lambda_2,\bar{M}-\lambda_1)$, where:
	\begin{itemize}
		\item $\lambda_1<0$ is the biggest negative eigenvalue of operator $T_n[\bar{M}]$ in $X_{n-1}$.
		\item $\lambda_2>0$ is  the least positive eigenvalue  of operator $T_n[\bar{M}]$ in $X_{n-2}$.\\
	\end{itemize}
	
	If $n=2$, then  $-g_M(t,s)$ verifies $(Pg_1)$ if, and only if, $M\in (-\infty,\bar{M}-\lambda_1)$, where:
	\begin{itemize}
		\item $\lambda_1<0$ is the biggest negative eigenvalue of operator $T_2[\bar{M}]$ in $X_1$.\\
	\end{itemize}
	\end{theorem}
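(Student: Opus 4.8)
The plan is to deduce property $(Pg_1)$ from the constant sign of the Green's function and to read off the admissible $M$-intervals from the spectral characterization of constant sign established in \cite{CabSaa}. Write $G:=(-1)^{n-k}\,g_M$, so that in every one of the six cases the assertion to be proved is that $G$ satisfies $(Pg_1)$ exactly when $M$ lies in the stated interval. One implication is immediate: if $G$ satisfies $(Pg_1)$, then $G(t,s)\geqslant \varPhi(s)\,k_1(t)>0$ for all $(t,s)\in(a,b)\times(a,b)$, so $G$ has constant sign, and the characterization of \cite{CabSaa} then forces $M$ into the corresponding interval. The six cases of the statement are precisely the case distinctions of that reference, organized according to the parity of $n-k$ and the position of $k$ (with $k=1$, $k=n-1$ and $n=2$ appearing as edge cases because one or both of the neighbouring spaces $X_{k-1}$, $X_{k+1}$ degenerate); consequently no separate argument is needed for them beyond invoking \cite{CabSaa}.

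The substantive part is the converse: assuming $M$ lies in the interval, so that by \cite{CabSaa} the kernel $G$ is nonnegative and, by disconjugacy of $T_n[\bar M]$, strictly positive on the open square, I would build the three functions of $(Pg_1)$ explicitly. For the upper bound I set $\varPhi(s):=\max_{t\in I}G(t,s)$ and $k_2\equiv 1$; then $0\leqslant G(t,s)\leqslant \varPhi(s)\,k_2(t)$ holds trivially, $\varPhi$ is continuous on $I$ because $G$ is jointly continuous on the compact square, and $\varPhi(s)>0$ for $s\in(a,b)$ because for interior $s$ the section $G(\cdot,s)$ cannot vanish identically in $t$, its $(n-1)$-st $t$-derivative jumping across $t=s$. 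For the lower bound I set
\[
k_1(t):=\inf_{s\in(a,b)}\frac{G(t,s)}{\varPhi(s)},
\]
so that $G(t,s)\geqslant \varPhi(s)\,k_1(t)$ and $k_1\leqslant k_2$ hold by construction; it then remains to prove that $k_1(t)>0$ for every $t\in(a,b)$ and that $k_1$ is continuous on $I$.

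Establishing the strict positivity of this infimum is the step I expect to be the main obstacle. The ratio $G(t,s)/\varPhi(s)$ is continuous and positive on the interior, so the only possible degeneration occurs as $s\to a^{+}$ or $s\to b^{-}$, where numerator and denominator vanish together. Here I would exploit the boundary structure of the Green's function in its second variable: as a function of $s$, $G(t,s)$ satisfies the adjoint boundary conditions and hence vanishes at $s=a$ to order exactly $n-k$ and at $s=b$ to order exactly $k$. Writing the leading expansions $G(t,s)=c_a(t)\,(s-a)^{n-k}+o\!\left((s-a)^{n-k}\right)$ and $G(t,s)=c_b(t)\,(b-s)^{k}+o\!\left((b-s)^{k}\right)$, uniformly in $t$, the same orders control $\varPhi$, giving
\[
\lim_{s\to a^{+}}\frac{G(t,s)}{\varPhi(s)}=\frac{c_a(t)}{\max_{\tau}c_a(\tau)},\qquad \lim_{s\to b^{-}}\frac{G(t,s)}{\varPhi(s)}=\frac{c_b(t)}{\max_{\tau}c_b(\tau)}.
\]
These limits are strictly positive on $(a,b)$ precisely because the leading coefficients $c_a,c_b$ are; this positivity is where disconjugacy does the real work, since $c_a$ (respectively $c_b$) is, up to a positive constant, a solution of $T_n[\bar M]\,w=0$ vanishing to order $n-k$ (respectively $k$) at the opposite endpoint, and a nontrivial such solution cannot acquire extra interior zeros without violating disconjugacy. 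Granting the uniformity of these expansions in $t$ — the most delicate point being the behaviour near the corners $(a,a)$ and $(b,b)$ — the map $s\mapsto G(t,s)/\varPhi(s)$ extends continuously and strictly positively to the closed interval, so its infimum is attained and positive, yielding $k_1(t)>0$; continuity of $k_1$ then follows from the uniform control of the expansions. Together with the easy implication and \cite{CabSaa}, this proves the claimed equivalence in each of the six cases.
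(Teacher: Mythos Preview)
Your overall architecture is right: the ``only if'' direction reduces to the constant-sign characterization of \cite{CabSaa}, and the ``if'' direction amounts to manufacturing $\varPhi,k_1,k_2$ from a kernel already known to be strictly positive on the open square. But your execution diverges from the paper's in a way that makes life harder and introduces a genuine error.

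The paper does not take $\varPhi(s)=\max_{t}G(t,s)$. It takes the explicit polynomial
\[
\varPhi(s)=(s-a)^{n-k}(b-s)^k,
\]
whose vanishing orders match exactly those of $g_M(t,\cdot)$ at $s=a$ and $s=b$. With this choice the boundary limits of $G(t,s)/\varPhi(s)$ are, up to explicit nonzero constants, precisely the quantities
\[
(-1)^{n-k}\frac{\partial^{n-k}g_M}{\partial s^{n-k}}\Big|_{s=a}\quad\text{and}\quad (-1)^{n}\frac{\partial^{k}g_M}{\partial s^{k}}\Big|_{s=b},
\]
and the proof of \cite[Theorem~3.1]{CabSaa} already establishes that these are strictly positive on $(a,b)$ for $M$ in the stated interval. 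So the corner analysis you flag as ``the most delicate point'' simply does not arise: one defines $\tilde u$ as the continuous extension of $G/\varPhi$ to $(a,b)\times I$, sets $k_1(t)=\min_{s\in I}\tilde u(t,s)$ and $k_2(t)=\max_{s\in I}\tilde u(t,s)$, and reads off $(Pg_1)$.

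Your attempt to recover the positivity of the leading coefficients $c_a,c_b$ from disconjugacy has a real gap. You write that $c_a$ solves $T_n[\bar M]\,w=0$ and invoke disconjugacy of $T_n[\bar M]$; but $c_a(t)$ is (a constant multiple of) $\partial_s^{\,n-k}g_M(t,s)\big|_{s=a}$, which as a function of $t$ satisfies $T_n[M]\,w=0$, not $T_n[\bar M]\,w=0$. Disconjugacy is assumed only for $T_n[\bar M]$, and for $M$ in the given interval $T_n[M]$ need not be disconjugate, so your zero-counting argument does not apply. The conclusion you want is nevertheless true, but it is exactly one of the boundary-derivative inequalities supplied by \cite{CabSaa}; citing it directly, as the paper does, closes the gap and removes the need for your $\varPhi=\max_t G$ detour altogether.
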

\begin{proof}
	From the proof of \cite[Theorem 3.1]{CabSaa} we obtain that  the following inequalities are fulfilled for those $M$  which the result refers to.
	\[\begin{split}
	(\infty>)(-1)^{n-k}g_M(t,s)&>0\,,\quad \forall\, (t,s)\in (a,b)\times(a,b)\,,\\
	(\infty>)(-1)^{n-k}\dfrac{\partial^kg_M(t,s)}{\partial t^k}_{\mid t=a}&>0\,,\quad \forall\, s\in(a,b)\,,\\
	(\infty>)(-1)^{n-k}\dfrac{\partial^{n-k}g_M(t,s)}{\partial s^{n-k}}_{\mid s=a}&>0\,,\quad \forall\, t\in (a,b)\,,\\
	(\infty>)\dfrac{\partial^{n-k}g_M(t,s)}{\partial t^{n-k}}_{\mid t=b}&>0\,,\quad \forall\, s\in (a,b)\,,\\
	(\infty>)(-1)^n\dfrac{\partial^k g_M(t,s)}{\partial s^k}_{\mid s=b}&>0\,,\quad \forall\, t\in (a,b)\,.
	\end{split}\]
	
	Now, we construct the following function:
	\[\varPhi(s)=(s-a)^{n-k}\,(b-s)^k>0\,,\forall\, s\in(a,b)\,,\]
	and we have that
	\[u(t,s)=(-1)^{n-k}\dfrac{g_M(t,s)}{\varPhi(s)}>0\,,\forall\,(t,s)\in(a,b)\times(a,b)\,.\]
	
	Moreover, the following limits are real and:
	\[\begin{split}
	\lim_{s\rightarrow a^+}(-1)^{n-k}\dfrac{g_M(t,s)}{\varPhi(s)}&=\dfrac{(-1)^{n-k}\dfrac{\partial^{n-k}g_M(t,s)}{\partial s^{n-k}}_{\mid s=a}}{(n-k)!\,(b-a)^{k}}>0\,,\quad \forall\,t\in(a,b)\,,\\
	\lim_{s\rightarrow b^-}(-1)^{n-k}\dfrac{g_M(t,s)}{\varPhi(s)}&=\dfrac{(-1)^{n-k}\dfrac{\partial^{k}g_M(t,s)}{\partial s^{k}}_{\mid s=b}}{k!\,(-1)^k(b-a)^{n-k}}\\&=\dfrac{(-1)^{n}\dfrac{\partial^{k}g_M(t,s)}{\partial s^{k}}_{\mid s=b}}{k!(b-a)^{n-k}}>0\,,\quad \forall\,t\in(a,b)\,.
	\end{split}\]
	
	Hence, we can define $\tilde u(t,s)$, as the continuous extension of $u(t,s)$ to $(a,b)\times I$, and we have
	\[\begin{split}
	k_1(t)&:=\min_{s\in I}\tilde u(t,s)>0\,,\quad\forall\,t\in(a,b)\,,\\
	k_2(t)&:=\max_{s\in I}\tilde u(t,s)>0\,,\quad \forall\,t\in(a,b)\,.
	\end{split}\]
	Clearly, $0<k_1(t)<k_2(t)$ for all $t\in(a,b)$ and $(-1)^{n-k}\,g_M(t,s)$ fulfills condition $(P_{g1})$. Then, the result is proved.
\end{proof}
\subsection{Particular cases}
This section is devoted to apply previous results for some particular cases with fixed $(k,n-k)$ boundary conditions. We distinguish the cases depending on the value of $n$.
\begin{itemize}
	\item Second order: $n=2$ and $k=1$.
\end{itemize}
Let us study a second order operator with constant coefficients
\begin{equation}
\label{Ec::2or} T_2[B,M]\,u(t)=u''(t)+B\,u'(t)+M\,u(t)\,,\quad t\in  [0,1]\,,
\end{equation}
for a fixed $B\in\mathbb{R}$.

We consider the space of definition related to the boundary conditions $(1,1)$. That is,
\[X_1=\left\lbrace u\in C^2([0,1])\ \mid\ u(0)=u(1)=0\right\rbrace .\]

As it is proved in  \cite{CabSaa}, the biggest negative eigenvalue of $T_2[B,0]$ in $X_1$ is $\lambda_B=-\dfrac{B^2+4\,\pi^4}{4}$. Moreover, $T_2[B,0]\,u(t)=0$ is a disconjugate equation on $[0,1]$ for all $B\in\mathbb{R}$.

Thus, from Theorem \ref{T::7}, $-g_M(t,s)$, satisfies the property $(Pg_1)$ if, and only if, $M\in\left( -\infty, \dfrac{B^2+4\,\pi^2}{4}\right)$. In particular such a property is fulfilled for $M=0$. 

Hence, let us study the following problem:
\begin{equation}\label{Ec::2ord}
\begin{split}
u''(t)+B\,u'(t)+f(t,u(t))=0\,,\ t\in[0,1]\,,\quad
u(0)=u(1)=0\,.
\end{split}
\end{equation}

Clearly, $\varPhi(s)=s(1-s)$.

\vspace{0.5cm}

First, consider $B=0$. We have, (see \cite{Cab,CaCiMa})
\[-g(t,s)=\begin{cases}
s\,(1-t)\,,&0\leqslant s\leqslant t\leqslant1\\\\
(1-s)\,t\,,&0<t<s\leqslant1\,.
\end{cases}\]
%

By direct calculations, we obtain $k_2(t)=1=K_2$ and
\[k_1(t)=\min_{t\in[0,1]}\{1-t,t\}=\begin{cases}
t\,,&0\leqslant t\leqslant \dfrac{1}{2}\,,\\\\
1-t\,,&\dfrac{1}{2}<t\leqslant 1\,,
\end{cases}\]
and $K_1=k_1\left( \frac{1}{2}\right) =\dfrac{1}{2}$.

In Figure \ref{Fig::1}, it is represented the function $\tilde{u}(t,s)=-\dfrac{g(t,s)}{\varPhi(s)}$, bounded from above by $k_2(t)$ and from below by $k_1(t)$.  Moreover, in Figure \ref{Fig::2}, it is given the same representation considering the constant values $t_0=\frac{2}{3}$ and $s_0=\frac{2}{3}$, respectively.
	\begin{figure}[h]
	\centering
	\includegraphics[width=0.7\textwidth]{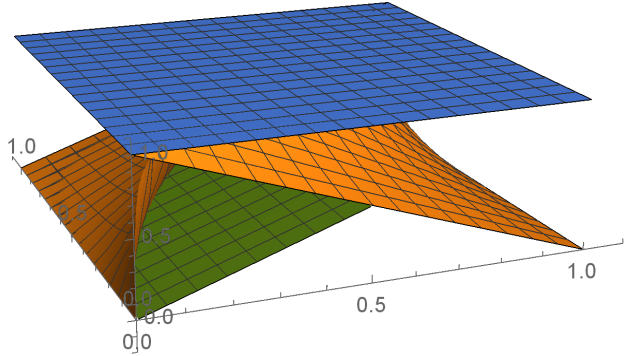}
	\caption{\scriptsize$\tilde u(t,s)$ (orange) bounded from above by $k_2(t)$ (blue) and from below by $k_1(t)$ (green).\label{Fig::1}}
\end{figure}

	\begin{figure}[h]
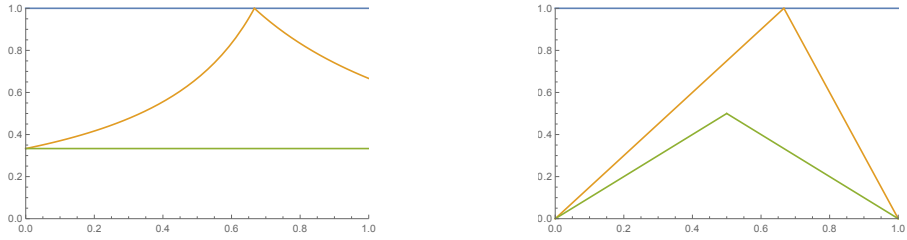

	\centering
	\includegraphics[width=0.4\textwidth]{B0Gtcte} \hspace{2cm}\includegraphics[width=0.4\textwidth]{B0Gscte} 
	\caption{\scriptsize Figure \ref{Fig::1} for $t_0=\frac{2}{3}$ on the left and for $s_0=\frac{2}{3}$ on the right.\label{Fig::2}}
\end{figure}
Let us consider $I_1=[a_1,b_1]=\left[ \dfrac{1}{4},\dfrac{3}{4}\right]$, then $m_1=\dfrac{1}{4}$, and it is immediate to verify that
\[\begin{split}
\int_0^1\varPhi(s)\,ds&=\dfrac{1}{6}\,,\\
\int_{1/4}^{3/4}\varPhi(s)\,ds&=\dfrac{11}{96}\,,\\
\int_{1/4}^{3/4}k_1(s)\,\varPhi(s)\,ds&=\dfrac{67}{1536}\,.
\end{split}\]

Thus, we are able to ensure the existence of at least two or three solutions of problem \eqref{Ec::2ord} by means of the previously obtained fixed point theorems. First, let us write the corresponding assumptions $(H_1)$ and $(H_2)$ for this situation,
\begin{itemize}
	\item[$(H_1)$] There exists $p>0$ such that $f(t,u)\leqslant 6\,p$ for all $t\in [0,1]$ and $u\in[0,p]$.
	
	\item[$(H_2)$] There exists $q>0$ such that $f(t,u)\geqslant \dfrac{3072}{67}u$ for all $t\in\left[ \dfrac{1}{4},\dfrac{3}{4}\right] $ and $u\in\left[ \dfrac{q}{4},q\right] $.
\end{itemize}

Finally,  as a direct consequence of Theorems \ref{T::5} and \ref{T::6}, we obtain the following results:

\begin{theorem}\label{T::5.3}
	Let $I=[0,1]$ and $I_1=\left[ \frac{1}{4},\frac{3}{4}\right] $ and suppose that there exist positive numbers $p$, $q$ and $r$ such that $0<p<q<r$, and assume that function $f$ satisfies the following conditions:
	\begin{enumerate}[$(i)$]
		\item $f(t,u)\geqslant \dfrac{384}{11}u$ for all $t\in I_1 $ and $u\in\left[ r,4\,r\right] $, being the inequality strict at $u=r$,
		\item $f(t,u)\leqslant 6\,q$ for all $t\in I$ and $u\in\left[ 0,4\,q\right] $, being the inequality strict at $u=q$,
		\item $f(t,u)>\dfrac{3072}{67}u$ for all $t\in I_1 $ and $u\in\left[ \dfrac{p}{4},p\right] $.
	\end{enumerate}
	
	Then, for $B=0$, problem \eqref{Ec::2ord} has at least two positive solutions,  $u_1$ and $u_2$, such that \[p<\|u_1\|_\infty\,,\ \max_{t\in I_1}u_1(t)<q\text{ and } q<\max_{t\in I_1}u_2(t)\,,\ \min_{t\in I_1}u_2(t)<r\,.\]
\end{theorem}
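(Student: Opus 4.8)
The plan is to derive Theorem \ref{T::5.3} as a direct specialization of the general Theorem \ref{T::5} to the concrete data $I=[0,1]$, $I_1=[\tfrac14,\tfrac34]$, $B=0$, and $M=0$. The essential observation is that the equivalence between solutions of problem \eqref{Ec::2ord} and fixed points of the integral operator $\mathcal L$ of \eqref{Ec::OpInt} has already been established, together with the fact (from Theorem \ref{T::7} applied to $T_2[B,0]$ in the second-order case) that $-g(t,s)$ satisfies $(Pg_1)$ for $M=0$. Thus the whole task reduces to checking that the three abstract hypotheses $(i)$--$(iii)$ of Theorem \ref{T::5} become precisely the three numerical conditions listed in Theorem \ref{T::5.3} once all the constants are substituted.

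First I would record the relevant constants for this kernel, all of which are already computed in the preceding discussion: $\varPhi(s)=s(1-s)$, $k_2(t)\equiv 1$ so $K_2=1$, $K_1=\tfrac12$, $m_1=\tfrac14$, together with the three integrals
\[
\int_0^1\varPhi(s)\,ds=\tfrac16,\qquad
\int_{1/4}^{3/4}\varPhi(s)\,ds=\tfrac{11}{96},\qquad
\int_{1/4}^{3/4}k_1(s)\,\varPhi(s)\,ds=\tfrac{67}{1536}.
\]
Then I would substitute these into each of the three generic bounds of Theorem \ref{T::5}. In condition $(i)$ the interval $[r,\tfrac{K_2}{m_1}r]$ becomes $[r,4r]$ and the coefficient $\tfrac{1}{m_1\int_{a_1}^{b_1}\varPhi}$ becomes $\tfrac{1}{(1/4)(11/96)}=\tfrac{384}{11}$; in condition $(ii)$ the interval $[0,\tfrac{K_2}{m_1}q]=[0,4q]$ appears and the coefficient $\tfrac{1}{K_2\int_a^b\varPhi}$ becomes $\tfrac{1}{(1)(1/6)}=6$; in condition $(iii)$ the interval $[\tfrac{m_1}{K_2}p,p]=[\tfrac14 p,p]$ appears and the coefficient $\tfrac{K_2}{K_1\int_{a_1}^{b_1}k_1\varPhi}$ becomes $\tfrac{1}{(1/2)(67/1536)}=\tfrac{3072}{67}$. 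These are exactly the numerical conditions $(i)$--$(iii)$ stated in Theorem \ref{T::5.3}, so the hypotheses transfer verbatim.

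With all three hypotheses of Theorem \ref{T::5} verified, that theorem yields two fixed points $u_1,u_2\in\mathcal P$ of $\mathcal L$ satisfying $p<\|u_1\|_\infty$, $\max_{t\in I_1}u_1(t)<q<\max_{t\in I_1}u_2(t)$, and $\min_{t\in I_1}u_2(t)<r$, which are the asserted inequalities. Finally I would note that these fixed points correspond to solutions of \eqref{Ec::2ord}, and that they are positive: membership in the cone $\mathcal P$ forces $u_i(t)\ge \tfrac{k_1(t)}{K_2}\|u_i\|_\infty\ge 0$, while the strict lower bounds ($p<\|u_1\|_\infty$ and $q<\max_{I_1}u_2$) prevent either from being the trivial solution.

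The proof is essentially bookkeeping, so there is no deep obstacle; the only point requiring genuine care is the arithmetic of the three substituted coefficients, especially confirming $\tfrac{1}{(1/4)(11/96)}=\tfrac{384}{11}$ and $\tfrac{1}{(1/2)(67/1536)}=\tfrac{3072}{67}$, since a slip in any of these fractions would misstate the hypotheses. I would therefore double-check each coefficient against its generic form in Theorem \ref{T::5} before concluding, and confirm that the strictness requirements (strict at $u=r$ in $(i)$ and at $u=q$ in $(ii)$) are carried over unchanged.
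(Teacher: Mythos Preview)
Your proposal is correct and matches the paper's approach exactly: the paper presents Theorem \ref{T::5.3} simply as ``a direct consequence of Theorems \ref{T::5} and \ref{T::6}'' after computing the constants $K_1$, $K_2$, $m_1$, and the three integrals, and your argument carries out precisely this substitution and verification. Your arithmetic for the three coefficients $\tfrac{384}{11}$, $6$, and $\tfrac{3072}{67}$ is accurate.
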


\begin{theorem}\label{T::5.4}
	Let $I=[0,1]$ and $I_1=\left[ \frac{1}{4},\frac{3}{4}\right] $ and suppose $p$, $q$ and $r$ are positive numbers such that 
	\[0<p<q<4\,q\leqslant r\,,\]
	such the function $f$ satisfies the following conditions:
	\begin{enumerate}[$(a)$]
		\item $f(t,u)\leqslant 6\,r$ for all $t\in I$ and $u\in\left[ 0,r\right] $,
		\item $f(t,u)<6\,p$ for all $s\in I$ and $u\in\left[ 0,p\right] $,
		\item $f(t,u)\geqslant\dfrac{3072}{67}u$ for all $s\in I_1 $ and $u\in\left[ q,4\,q\right] $, being the inequality strict at $u=q$.
	\end{enumerate}
	
	Then, for $B=0$, problem \eqref{Ec::2ord} has at least three positive solutions,  $u_1$, $u_2$, $u_3\in \{u\in\mathcal P\ \mid\ \|u\|_\infty\leqslant r\}$, such that  $\max_{t\in I_1}u_1(t)<p$, $q<\min_{t\in I_1}u_2(t)$ and $p<\max_{t\in I_1}u_3(t)$ with $\min_{t\in I_1}u_3(t)<q$.
\end{theorem}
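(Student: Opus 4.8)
The plan is to obtain Theorem~\ref{T::5.4} as a direct specialization of the general three-solution result, Theorem~\ref{T::6}, to the concrete data of problem~\eqref{Ec::2ord} with $B=0$. First I would record the explicit constants computed just above the statement, namely $K_1=\tfrac12$, $K_2=1$, $m_1=\tfrac14$, together with $\varPhi(s)=s(1-s)$, $\int_0^1\varPhi(s)\,ds=\tfrac16$, $\int_{1/4}^{3/4}\varPhi(s)\,ds=\tfrac{11}{96}$ and $\int_{1/4}^{3/4}k_1(s)\,\varPhi(s)\,ds=\tfrac{67}{1536}$. Since for $M=0$ the associated Green's function $-g(t,s)$ satisfies $(Pg_1)$ (as established via Theorem~\ref{T::7} and the direct computation of $k_1,k_2$), the hypotheses of Theorem~\ref{T::6} are available once we translate its abstract conditions $(a)$, $(b)$, $(c)$ into the present numerical setting.

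The core of the argument is simply substituting the constants into the chain $0<p<q<\tfrac{K_2}{m_1}q\leqslant r$ and into conditions $(a)$–$(c)$ of Theorem~\ref{T::6}. Here $\tfrac{K_2}{m_1}=\tfrac{1}{1/4}=4$, so the admissibility relation $0<p<q<\tfrac{K_2}{m_1}q\leqslant r$ becomes exactly $0<p<q<4q\leqslant r$, matching the statement. For condition $(a)$ the bound $\tfrac{r}{K_2\int_a^b\varPhi(s)\,ds}=\tfrac{r}{1\cdot(1/6)}=6r$, giving $f(t,u)\leqslant 6r$ on $[0,r]$; for $(b)$ the analogous computation yields the bound $6p$, giving $f(t,u)<6p$ on $[0,p]$; and for $(c)$ the bound $\tfrac{u}{m_1\int_{a_1}^{b_1}\varPhi(s)\,ds}=\tfrac{u}{(1/4)\cdot(11/96)}=\tfrac{384}{11}u$ would appear. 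I should double-check this last coefficient, since the statement of $(c)$ in Theorem~\ref{T::5.4} actually writes $\tfrac{3072}{67}u$ on $u\in[q,4q]$, which is the coefficient arising from the \emph{other} lower bound $\tfrac{K_2\,u}{K_1\int_{a_1}^{b_1}k_1(s)\,\varPhi(s)\,ds}=\tfrac{u}{(1/2)\cdot(67/1536)}=\tfrac{3072}{67}u$; reconciling which lower bound is intended in $(c)$ of Theorem~\ref{T::6} versus what is printed here is the one place requiring care.

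Once the three conditions are verified to coincide with the printed hypotheses, the conclusion transfers verbatim: Theorem~\ref{T::6} produces three fixed points $u_1,u_2,u_3\in\{u\in\mathcal P\mid\|u\|_\infty\leqslant r\}$ of the operator $\mathcal L$ with $\mathcal L\,u(t)=\int_0^1(-g(t,s))\,f(s,u(s))\,ds$, satisfying $\max_{t\in I_1}u_1(t)<p$, $q<\min_{t\in I_1}u_2(t)$, and $p<\max_{t\in I_1}u_3(t)$ with $\min_{t\in I_1}u_3(t)<q$. Because fixed points of $\mathcal L$ are precisely the solutions of~\eqref{Ec::2ord} (by the equivalence between~\eqref{Ec::Dif}–\eqref{Ec::BC} and~\eqref{Ec::OpIn}), and because membership in the cone $\mathcal P$ forces $u(t)\geqslant\tfrac{k_1(t)}{K_2}\|u\|_\infty\geqslant 0$, these fixed points are in fact nonnegative solutions, which gives the asserted positivity. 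The main obstacle is entirely bookkeeping rather than conceptual: I must get the two candidate lower-bound coefficients $\tfrac{384}{11}$ and $\tfrac{3072}{67}$ attached to the correct conditions, so that the specialization of Theorem~\ref{T::6} reproduces the printed $(a)$, $(b)$, $(c)$ exactly; no new estimate or limiting argument is needed beyond the arithmetic substitution.
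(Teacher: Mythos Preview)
Your approach is exactly the paper's: Theorem~\ref{T::5.4} is stated without proof as ``a direct consequence of Theorem~\ref{T::6}'' via the substitution of the constants $K_2=1$, $m_1=\tfrac14$, $\int_0^1\varPhi=\tfrac16$, and $\int_{1/4}^{3/4}\varPhi=\tfrac{11}{96}$ computed just above. Your flagged discrepancy is real: plugging into condition~$(c)$ of Theorem~\ref{T::6} gives the coefficient $\tfrac{384}{11}$, not $\tfrac{3072}{67}$; but since $\tfrac{3072}{67}>\tfrac{384}{11}$, the printed hypothesis is simply a stronger sufficient condition and the specialization still goes through.
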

\vspace{0.5cm}

Now, let us see what happens for $B>0$. In this case, the expression of the Green's function (see \cite{CaCiMa}) is given by
\[-g(t,s)=\begin{cases}
\dfrac{\left( e^{B\,s}-1\right) \left( e^{B\,(1-t)}-1\right) }{B\,(e^B-1)}\,,&0\leqslant s\leqslant t\leqslant 1\,,\\\\
\dfrac{\left( e^{B}-e^{B\,s}\right) \left( 1-e^{-B\,t}\right) }{B\,(e^B-1)}\,,&0<t<s\leqslant 1\,.
\end{cases}\]

After routine calculations we obtain, 
\[k_2(t)=\dfrac{(e^{B}-e^{B\,t})\,(1-e^{-B\,t})}{t\,(1-t)\,B\,(e^B-1)}\leqslant k_2(1)=1=K_2\,.\]

 Moreover, we have
\[k_1(t)=\begin{cases}
\dfrac{e^B-e^{B\,(1-t)}}{e^B-1}\,,&0\leqslant t\leqslant t_1=1-\dfrac{\log\left( \frac{1+e^B}{2}\right) }{B}\,,\\\\
\dfrac{e^{B\,(1-t)}-1}{e^B-1}\,,&t_1<t\leqslant 1\,.
\end{cases}\]

Thus, $K_1=\max_{t\in[0,1]}k_1(t)=k_1(t_1)=\dfrac{1}{2}$.

If we choose \begin{equation}
\label{Ec::I1}
I_1=[a_1,b_1]=\left[ 1-\dfrac{\log\left( \frac{1+3\,e^B}{4}\right) }{B},1-\dfrac{\log\left( \frac{3+e^B}{4}\right) }{B}\right]\,,\end{equation} we obtain $m_1=k_1(a_1)=k_1(b_1)=\dfrac{1}{4}$.

\begin{remark}\label{R::I11}
	For $B\in[0,2]$, both $a_1$ and $b_1$ are decreasing functions of $B$. Moreover,
	\[	a_1\in\left[ 1-\frac{1}{2} \log \left(\frac{1}{4} \left(1+3 e^2\right)\right),\frac{1}{4}\right] \subset\left[ \frac{3}{25},\frac{1}{4}\right] \,,\]
	and
	\[	b_1\in\left[ 1-\frac{1}{2} \log \left(\frac{1}{4} \left(3+e^2\right)\right),\frac{3}{4}\right] \subset\left[ \frac{13}{25},\frac{3}{4}\right] \,.\]
\end{remark}
To attain the rest of the constants involved on the different results, we use approximations of the values of the integrals. To this end we need to fix the value of $B\in[0,2]$. 

The bounds shown above can be obtained for all $B\in[0,2]$, in order to simplify the description of $k_1$, let us choose $B$ such that $t_1=\dfrac{1}{3}$, that is,  $B=\log(2+\sqrt{5})\in[0,2]$. In such a case, we have \[a_1=\dfrac{\log(\sqrt{5}-1)}{\log{(2+\sqrt{5})}}\ \text{ and } b_1=\dfrac{\log\left( 1+\frac{3}{\sqrt{5}}\right) }{\log(2+\sqrt{5})}\,.\]

 Moreover, in this case, we obtain
\[\begin{split}
\int_{a_1}^{b_1}k_1(s)\,\varPhi(s)\,ds&\approxeq 0.035872>\dfrac{3587}{100000}\,,\\
\int_{a_1}^{b_1}\varPhi(s)\,ds&\approxeq 0.095719>\dfrac{957}{10000}\,.
\end{split}\]

As for $B=0$, in Figure \ref{Fig::5}, we represent  function $\tilde{u}(t,s)$, bounded from above by $k_2(t)$ and from below by $k_1(t)$ for $B=\log(2+\sqrt{5})$.  Moreover, in Figure \ref{Fig::6}, it is plotted the same representation considering the constant values $t_0=\frac{2}{3}$ and $s_0=\frac{2}{3}$, respectively.
\begin{figure}[h]
	\centering
	\includegraphics[width=0.7\textwidth]{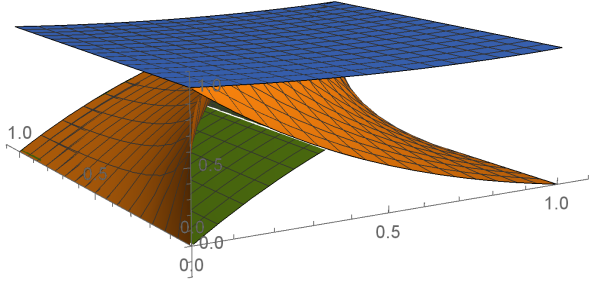}
	\caption{\scriptsize$\tilde u(t,s)$ (orange) bounded from above by $k_2(t)$ (blue) and from below by $k_1(t)$ (green).\label{Fig::5}}
\end{figure}

\begin{figure}[h]
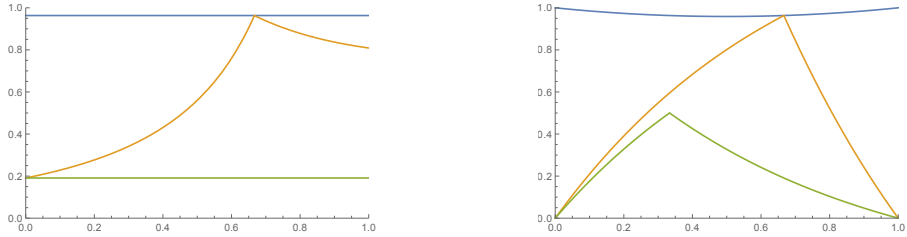

	\centering
	\includegraphics[width=0.4\textwidth]{BGTcte} \hspace{2cm}\includegraphics[width=0.4\textwidth]{BGScte} 
	\caption{\scriptsize Figure \ref{Fig::3} for $t_0=\frac{2}{3}$ on the left and for $s_0=\frac{2}{3}$ on the right.\label{Fig::6}}
\end{figure}
Thus, as in the case where $B=0$, we obtain the correspondent results of existence of solution for such a problem. First, let us write the related $(H_1)$ and $(H_2)$. Let $I=[0,1]$ and $I_1$ be defined in \eqref{Ec::I1} for $B=\log (2+\sqrt{5})$.
\begin{itemize}
	\item[$(H_1)$] There exists $p>0$ such that $f(t,u)\leqslant 6\,p$ for all $t\in I$ and $u\in[0,p]$.
		\item[$(H_2)$] There exists $q>0$ such that $f(t,u)\geqslant \dfrac{200000}{3587}u>\dfrac{u}{\frac{1}{2}\int_{a_1}^{b_1}k_1(s)\,\varPhi(s)\,ds}$ for all $t\in I_1 $ and $u\in\left[ \dfrac{q}{4},q\right] $.
\end{itemize}

Finally, we can rewrite Theorems \ref{T::5} and \ref{T::6} as follows.

\begin{theorem}\label{T::5B}
	Let $I=[0,1]$ and $I_1$ be defined in \eqref{Ec::I1} for $B=\log (2+\sqrt{5})$ and suppose that there exist positive numbers $p$, $q$ and $r$ such that $0<p<q<r$, and assume that function $f$ satisfies the following conditions:
	\begin{enumerate}[$(i)$]
		\item $f(t,u)\geqslant\dfrac{40000}{957}u>\dfrac{u}{\frac{1}{4}\int_{a_1}^{b_1}\varPhi(s)\,ds}$ for all $t\in I_1 $ and $u\in\left[ r,4\,r\right] $,
		\item $f(t,u)\leqslant 6\,q$ for all $t\in I$ and $u\in\left[ 0,4\,q\right] $, being the inequality strict for $u=q$,
		\item $f(t,u\geqslant\dfrac{200000}{3587}u$ for all $t\in I_1$ and $u\in\left[ \dfrac{p}{4},p\right] $.
	\end{enumerate}
	
	Then, for $B=\log(2+\sqrt{5})$,  problem \eqref{Ec::2ord} has at least two positive solutions, $u_1$ and $u_2$, such that $p<\|u_1\|_\infty$, $\max_{t\in I_1}u_1(t)<q<\max_{t\in I_1}u_2(t)$ and $\min_{t\in I_1}u_2(t)<r$.
\end{theorem}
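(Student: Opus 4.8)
The plan is to recognise Theorem~\ref{T::5B} as the literal specialisation of Theorem~\ref{T::5} to the operator $T_2[B,0]$ with $B=\log(2+\sqrt5)$ and $M=0$, and then simply invoke Theorem~\ref{T::5}. First I would assemble the structural data already produced in this subsection: the Green's function $-g$ satisfies $(Pg_1)$ for $M=0$ (this follows from Theorem~\ref{T::7}, using the disconjugacy of $T_2[B,0]\,u=0$ and the fact that $0$ lies in $(-\infty,(B^2+4\pi^2)/4)$), the weight is $\varPhi(s)=s(1-s)$ with $\int_0^1\varPhi(s)\,ds=\tfrac16$, and the cone constants are $K_2=1$, $K_1=\tfrac12$, $m_1=\tfrac14$, with $I_1$ as in \eqref{Ec::I1}. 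Since fixed points of $\mathcal L$ with kernel $G=-g$ are exactly the solutions of \eqref{Ec::2ord}, this reduces everything to checking the three hypotheses of Theorem~\ref{T::5}.

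With these values the three ranges appearing in Theorem~\ref{T::5} become $[r,4r]$, $[0,4q]$ and $[\tfrac{p}{4},p]$, precisely the ranges stated in $(i)$--$(iii)$ of Theorem~\ref{T::5B}, because $K_2/m_1=4$ and $m_1/K_2=\tfrac14$. Condition $(ii)$ transfers verbatim: $q/(K_2\int_0^1\varPhi)=6q$ is an exact identity, so the bound $f(t,u)\le 6q$ with strictness at $u=q$ is exactly condition $(ii)$ of Theorem~\ref{T::5}.

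The only point requiring care is the direction of the two lower bounds, whose coefficients involve $\int_{a_1}^{b_1}\varPhi$ and $\int_{a_1}^{b_1}k_1\varPhi$, quantities available only numerically. Here I would invoke the rational under-estimates $\int_{a_1}^{b_1}\varPhi(s)\,ds>\tfrac{957}{10000}$ and $\int_{a_1}^{b_1}k_1(s)\varPhi(s)\,ds>\tfrac{3587}{100000}$ recorded above. Because these \emph{under}-estimate the true integrals, their reciprocals \emph{over}-estimate the coefficients $1/(m_1\int_{a_1}^{b_1}\varPhi)$ and $K_2/(K_1\int_{a_1}^{b_1}k_1\varPhi)$ demanded by Theorem~\ref{T::5}, so that
\[\frac{40000}{957}\,u>\frac{u}{m_1\int_{a_1}^{b_1}\varPhi(s)\,ds}\,,\qquad \frac{200000}{3587}\,u>\frac{K_2\,u}{K_1\int_{a_1}^{b_1}k_1(s)\,\varPhi(s)\,ds}\,,\]
for every $u>0$. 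Consequently the hypotheses $f(t,u)\ge\tfrac{40000}{957}u$ on $[r,4r]$ and $f(t,u)\ge\tfrac{200000}{3587}u$ on $[\tfrac p4,p]$ from Theorem~\ref{T::5B} imply the corresponding \emph{strict} inequalities of Theorem~\ref{T::5}$(i)$,$(iii)$; in particular the strictness at $u=r$ required in $(i)$ is automatic, since the chain of inequalities holds for all $u>0$.

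All hypotheses of Theorem~\ref{T::5} are thereby verified, so $\mathcal L$ has two fixed points $u_1,u_2\in\mathcal P$ with the announced ordering, which are the two solutions of \eqref{Ec::2ord}. Their positivity is immediate from membership in $\mathcal P$, since each satisfies $u_i(t)\ge \tfrac{k_1(t)}{K_2}\,\|u_i\|_\infty>0$ on $(0,1)$. I expect no genuine analytic obstacle here, as the substance lies entirely in Theorem~\ref{T::5}; the only delicate step is the consistent bookkeeping of the inequality directions for the numerically estimated integrals, which is exactly why the rational lower bounds $\tfrac{957}{10000}$ and $\tfrac{3587}{100000}$ (rather than the approximate decimal values) are used.
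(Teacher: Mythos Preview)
Your proposal is correct and matches the paper's approach exactly: the paper presents Theorem~\ref{T::5B} without a separate proof, introducing it with ``we can rewrite Theorems~\ref{T::5} and~\ref{T::6} as follows'' after having computed $K_2=1$, $K_1=\tfrac12$, $m_1=\tfrac14$, $\int_0^1\varPhi=\tfrac16$ and the rational lower bounds $\tfrac{957}{10000}$, $\tfrac{3587}{100000}$ for the two $I_1$-integrals. Your observation that these under-estimates make the resulting coefficients \emph{strict} over-estimates of the Theorem~\ref{T::5} thresholds---so that the strictness at $u=r$ in $(i)$ and the strict inequality in $(iii)$ of Theorem~\ref{T::5} come for free---is precisely the reason the paper can drop those strictness clauses from the statement of Theorem~\ref{T::5B}.
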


\begin{theorem}\label{T::6B}
	Let $I=[0,1]$ and $I_1$ be defined in \eqref{Ec::I1} for $B=\log (2+\sqrt{5})$ and suppose $p$, $q$ and $r$ are positive numbers such that 
	\[0<p<q<4\,q\leqslant r\,,\]
	such the function $f$ satisfies the following conditions:
	\begin{enumerate}[$(a)$]
		\item $f(t,u)\leqslant 6\,r$ for all $t\in I$ and $u\in\left[ 0,r\right] $,
		\item $f(t,u)<6\,p$ for all $t\in I$ and $u\in\left[ 0,p\right] $,
		\item $f(t,u)\geqslant\dfrac{40000}{957}u$ for all $s\in I_1 $ and $u\in\left[ q,4\,q\right] $.
	\end{enumerate}
	
	Then,  for $B=\log(2+\sqrt{5})$, problem \eqref{Ec::2ord} has at least three positive solutions, $u_1$, $u_2$, $u_3\in \{u\in\mathcal P\ \mid\ \|u\|_\infty\leqslant r\}$, such that  $\max_{t\in I_1}u_1(t)<p$, $q<\min_{t\in I_1}u_2(t)$ and $p<\max_{t\in I_1}u_3(t)$ with $\min_{t\in I_1}u_3(t)<q$.
\end{theorem}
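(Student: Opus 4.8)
The plan is to obtain Theorem~\ref{T::6B} as a direct specialization of the abstract three-solutions result, Theorem~\ref{T::6}, to the concrete Green's operator attached to \eqref{Ec::2ord} with $B=\log(2+\sqrt{5})$. First I would recall that, since $n=2$ and $k=1$, one has $(-1)^{n-k}=-1$, so the solutions of \eqref{Ec::2ord} are precisely the fixed points of the operator $\mathcal L$ in \eqref{Ec::OpInt} built with the kernel $G(t,s)=-g(t,s)$; and that the computations preceding the statement already show this kernel satisfies $(Pg_1)$ with $\varPhi(s)=s(1-s)$, $K_2=1$, $m_1=\tfrac14$, and $I_1$ as in \eqref{Ec::I1}. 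Hence all structural hypotheses of Theorem~\ref{T::6} are in force, and it only remains to match its abstract thresholds with the explicit numbers appearing in $(a)$--$(c)$.

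Next I would substitute the constants. The ratio $K_2/m_1$ equals $4$, so the admissibility chain $0<p<q<\tfrac{K_2}{m_1}\,q\le r$ of Theorem~\ref{T::6} becomes exactly the chain $0<p<q<4\,q\le r$ assumed here. Since $\int_0^1\varPhi(s)\,ds=\tfrac16$ and $K_2=1$, the bound $\tfrac{r}{K_2\int_a^b\varPhi(s)\,ds}$ equals $6\,r$ and $\tfrac{p}{K_2\int_a^b\varPhi(s)\,ds}$ equals $6\,p$, so hypotheses $(a)$ and $(b)$ of Theorem~\ref{T::6B} coincide verbatim with $(a)$ and $(b)$ of Theorem~\ref{T::6}. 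The only hypothesis requiring an argument is $(c)$.

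The delicate step --- and the one I expect to be the only nontrivial point --- is the coefficient in $(c)$. Theorem~\ref{T::6} asks for $f(t,u)\ge \tfrac{u}{m_1\int_{a_1}^{b_1}\varPhi(s)\,ds}$ with strict inequality at $u=q$, whereas here I only impose $f(t,u)\ge\tfrac{40000}{957}\,u$. I would close this gap using the numerical bound $\int_{a_1}^{b_1}\varPhi(s)\,ds>\tfrac{957}{10000}$ established above: with $m_1=\tfrac14$ this gives $m_1\int_{a_1}^{b_1}\varPhi(s)\,ds>\tfrac{957}{40000}$, hence $\tfrac{1}{m_1\int_{a_1}^{b_1}\varPhi(s)\,ds}<\tfrac{40000}{957}$. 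Consequently $\tfrac{40000}{957}\,u>\tfrac{u}{m_1\int_{a_1}^{b_1}\varPhi(s)\,ds}$ for every $u>0$, so hypothesis $(c)$ of Theorem~\ref{T::6B} implies that of Theorem~\ref{T::6} and, moreover, forces the inequality to be strict at $u=q$ automatically --- which is why no explicit strictness needs to be assumed here. With all three hypotheses verified, Theorem~\ref{T::6} yields three fixed points $u_1,u_2,u_3\in\{u\in\mathcal P\mid\|u\|_\infty\le r\}$ obeying exactly the stated localization; since these lie in the cone $\mathcal P$ they are nonnegative, and the localization bounds make them nontrivial, hence positive solutions of \eqref{Ec::2ord}.
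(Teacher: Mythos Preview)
Your proposal is correct and matches the paper's approach: the paper presents Theorem~\ref{T::6B} without proof, stating only that ``we can rewrite Theorems~\ref{T::5} and~\ref{T::6} as follows,'' so your explicit verification that the constants $K_2=1$, $m_1=\tfrac14$, $\int_0^1\varPhi=\tfrac16$ and the numerical bound $\int_{a_1}^{b_1}\varPhi>\tfrac{957}{10000}$ reduce the hypotheses of Theorem~\ref{T::6} to those of Theorem~\ref{T::6B} is exactly what the paper leaves implicit. Your observation that the strict inequality $\tfrac{40000}{957}>\tfrac{1}{m_1\int_{a_1}^{b_1}\varPhi}$ automatically supplies the strictness at $u=q$ required by Theorem~\ref{T::6}$(c)$ is a nice point that the paper does not spell out.
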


\vspace{0.5cm}

Finally, repeating the same arguments for $B<0$, we obtain that for $B\in[-2,0]$:
\[\begin{split}
k_2(t)&=\dfrac{\left( 1-e^{B\,t}\right) \,\left( e^{-B}-e^{-B\,t}\right) }{t\,(1-t)\,B\,(1-e^{-B})}\leqslant 1=K_2\,,\\\\
k_1(t)&=\begin{cases}
\dfrac{e^{-B\,t}-1}{e^{-B}-1}\,,&0\leqslant t\leqslant t_2=-\dfrac{\log\left( \frac{e^{-B}+1}{2}\right) }{B}\,,\\\\
\dfrac{e^{-B}-e^{-B\,t}}{e^{-B}-1}\,,&t_2<t\leqslant1\,,\end{cases}
\end{split}\]
and $K_1=\max_{t\in[0,1]}k_1(t)=k_1(t_2)=\dfrac{1}{2}$.

In this case, if we choose \begin{equation}
\label{Ec::I1_2}
I_1=[a_1,b_1]=\left[ -\dfrac{\log(\frac{e^{-B}+3}{4})}{B},-\dfrac{\log(\frac{3\,e^{-B}+1}{4})}{B}\right]\,,\end{equation} it is verified that $m_1=k_1(a_1)=k_1(b_1)=\dfrac{1}{4}$.

\begin{remark}\label{R::I12}
	For $B\in[-2,0]$, both $a_1$ and $b_1$ are decreasing functions of $B$. Moreover,
	\[	a_1\in\left[\frac{1}{4},\frac{1}{2} \log \left(\frac{1}{4} \left(3+e^2\right)\right)\right] \subset\left[\frac{1}{4},\frac{12}{25}\right]\,,\]
	and
	\[	b_1\in\left[\frac{3}{4},\frac{1}{2} \log \left(\frac{1}{4} \left(1+3 e^2\right)\right)\right]\subset\left[\frac{3}{4},\frac{22}{25}\right] \,.\]
\end{remark}

Moreover, for the choice $B=\log(\sqrt{5}-2)\in[-2,0]$ and he correspondent interval $I_1$ defined in \eqref{Ec::I1_2}, we attain the same bounds as for $B=\log(2+\sqrt{5})$. Thus, $(H_1)$ and $(H_2)$ coincide in both cases and Theorems \ref{T::5B} and \ref{T::6B} remain valid for $B=\log(\sqrt{5}-2)$.

It is important to recall that for all $B\in[-2,2]$ any suitable bounds can be obtained by using the expression of $k_1$ and $k_2$ without any additional difficulty.

\vspace{0.5cm}

If $B\notin[-2,2]$, the study is much more complicated. However the approach can also be done. For instance let us choose $B=-2\,\pi$, we have
\[-g(t,s)=\begin{cases}
\dfrac{\left(1- e^{-2\,\pi\,s}\right) \left( e^{2\,\pi}-e^{2\pi\,t}\right) }{2\,\pi\,(e^{2\pi}-1)}\,,&0\leqslant s\leqslant t\leqslant 1\,,\\\\
\dfrac{\left( e^{2\,\pi\,(1-s)}-1\right) \left( e^{2\,\pi\,t}-1\right) }{2\,\pi\,(e^{2\,\pi}-1)}\,,&0<t<s\leqslant 1\,.
\end{cases}\]

%
%
%

In this case, we are not able to give the exact expression  of $k_1(t)$, but we can obtain a lower bound for $\tilde u(t,s)$ as follows: 

%
\[k_1(t)=\begin{cases}
\dfrac{e^{2\,\pi\,t}-1}{e^{2\,\pi}-1}\,,&0\leqslant t\leqslant t_3\,,\\\\
\dfrac{250000}{62037}\left( 1-e^{\frac{-451}{500}\,\pi}\right)\,\dfrac{e^{2\,\pi}-e^{2\,\pi\,t}}{2\,\pi\,(e^{2\,\pi}-1)}\,,&t_3<t\leqslant 1\,,
\end{cases}\]
where $t_3=\dfrac{1}{2\,\pi}\log\left( \dfrac{125000\left( e^{\frac{549}{500}\,\pi}-e^{2\,\pi}\right) -62037\,\pi}{125000\left( e^{-\frac{451}{500}\,\pi}-1\right) -62037\,\pi}\right) \approxeq0.844992$.

Moreover, $\tilde u(t,s)\leqslant 1$ for all $(t,s)\in [0,1]\times [0,1]$.

In this case, we conclude that \[K_1=\max_{t\in[0,1]}k_1(t)=k_1(t_3)\approxeq 0.37642>\dfrac{47}{125}\,.\]

As in previous cases, in Figure \ref{Fig::7}, we represent the function $\tilde{u}(t,s)$, bounded from above by $K_2=1$ and from below by $k_1(t)$.  Moreover, in Figure \ref{Fig::8}, it is plotted the same representation considering the constant values $t_0=\frac{21}{25}$ and $s_0=\frac{21}{25}$, respectively.

\begin{figure}[h]
	\centering
	\includegraphics[width=0.7\textwidth]{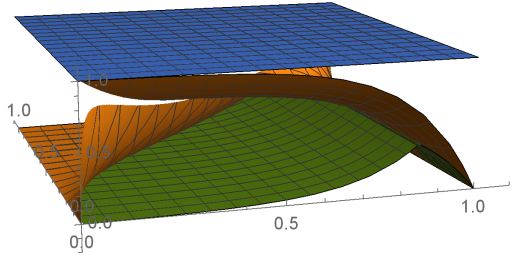}
	\caption{\scriptsize$\tilde u(t,s)$ (orange) bounded from above by $k_2(t)$ (blue) and from below by $k_1(t)$ (green).\label{Fig::7}}
\end{figure}

\begin{figure}[h]
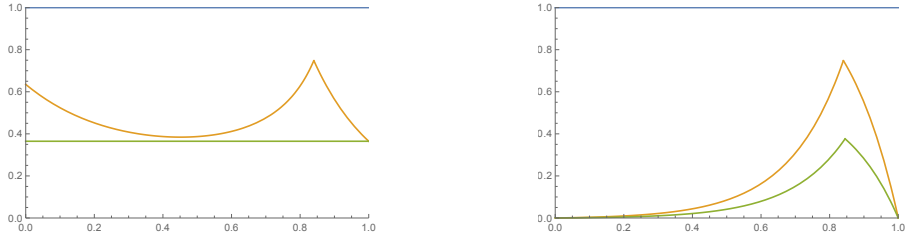

	\centering
	\includegraphics[width=0.4\textwidth]{BPitcte} \hspace{2cm}\includegraphics[width=0.4\textwidth]{BPiscte} 
	\caption{\scriptsize Figure \ref{Fig::7} for $t_0=\frac{21}{25}$ on the left and for $s_0=\frac{21}{25}$ on the right.\label{Fig::8}}
\end{figure}

If we choose, \begin{equation}
\label{Ec::I1_3} I_1=[a_1,b_1]=\left[ \dfrac{\log\left(\frac{3+e^{2\,\pi}}{4}\right) }{2\,\pi},0.9151\right] \subset \left[0.78 ,0.9151\right] \,,
\end{equation}
we have that $m_1=k_1(a_1)=\dfrac{1}{4}$ and
\[\begin{split}
\int_{a_1}^{b_1}\varPhi(s)\,ds&\approxeq 0.0172072>\dfrac{43}{2500}\,,\\
\int_{a_1}^{b_1}k_1(s)\,\varPhi(s)\,ds&\approxeq 0.005393>\dfrac{539}{100000}\,.
\end{split}\]

	Thus, as in the previous cases, we deduce the correspondent results of existence of solution for this problem. Let $I=[0,1]$ and $I_1$ be defined in \eqref{Ec::I1_3}. First, let us write the related $(H_1)$ and $(H_2)$ as follows:
	\begin{itemize}
		\item[$(H_1)$] There exists $p>0$ such that $f(t,u)\leqslant 6\,p$ for all $t\in I$ and $u\in[0,p]$.		
		\item[$(H_2)$] There exists $q>0$ such that $f(t,u)\geqslant \dfrac{12500000\,u}{25333}>\dfrac{u}{\frac{47}{125}\int_{a_1}^{b_1}k_1(s)\,\varPhi(s)\,ds}$ for all $t\in I_1 $ and $u\in\left[ \dfrac{q}{4},q\right] $.
	\end{itemize}
	
	So, we can rewrite Theorems \ref{T::5} and \ref{T::6} as follows.
	
	\begin{theorem}\label{T::5.7}
	 Let $I=[0,1]$ and $I_1$ be defined in \eqref{Ec::I1_3}  and suppose that there exist positive numbers $p$, $q$ and $r$ such that $0<p<q<r$, and suppose that function $f$ satisfies the following conditions:
		\begin{enumerate}[$(i)$]
			\item $f(t,u)\geqslant\dfrac{10000}{43}u>\dfrac{u}{\frac{1}{4}\int_{a_1}^{b_1}\varPhi(s)\,ds}$ for all $t\in I_1$ and $u\in\left[ r,4\,r\right] $,
			\item $f(t,u)\leqslant6\,q$ for all $t\in I$ and $u\in\left[ 0,4\,q\right] $, being the inequality strict for $u=q$,
			\item $f(t,u)\geqslant\dfrac{12500000}{25333}u$ for all $t\in I_1$ and $u\in\left[ \dfrac{p}{4},p\right] $.
		\end{enumerate}
		
		Then, for $B=-2\,\pi$,  problem \eqref{Ec::2ord} has at least two positive solutions, $u_1$ and $u_2$, such that $p<\|u_1\|_\infty$, $\max_{t\in I_1}u_1(t)<q<\max_{t\in I_1}u_2(t)$ and $\min_{t\in I_1}u_2(t)<r$.
	\end{theorem}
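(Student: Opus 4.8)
The plan is to obtain Theorem~\ref{T::5.7} as a direct specialization of the abstract two-solution result, Theorem~\ref{T::5}, to the concrete data computed for $B=-2\pi$. So the whole argument reduces to checking that the three hypotheses $(i)$--$(iii)$ stated here for problem~\eqref{Ec::2ord} are exactly, or strictly stronger than, the three hypotheses $(i)$--$(iii)$ of Theorem~\ref{T::5}, once the constants $K_1$, $K_2$, $m_1$, $\int_a^b\varPhi$ and $\int_{a_1}^{b_1}k_1\varPhi$ are inserted.

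First I would record that $(Pg_1)$ holds. For $n=2$, $k=1$ and $M=0$, the discussion following~\eqref{Ec::2or} (via Theorem~\ref{T::7}) shows that $-g(t,s)$ verifies $(Pg_1)$ whenever $0\in(-\infty,(B^2+4\pi^2)/4)$, which is automatic; in particular it holds for $B=-2\pi$, so the operator $\mathcal L$ built from $G=-g$ falls under the scope of Theorem~\ref{T::5}. I would also recall from the preceding computations the constants $K_2=1$, $m_1=\tfrac14$, hence $K_2/m_1=4$ (which fixes the interval $[r,4r]$ in $(i)$ and the upper endpoint $4q$ in $(ii)$), together with the rational estimates $K_1>47/125$, $\int_{a_1}^{b_1}\varPhi(s)\,ds>43/2500$, $\int_{a_1}^{b_1}k_1(s)\varPhi(s)\,ds>539/100000$, and the exact value $\int_0^1\varPhi(s)\,ds=1/6$.

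Next I would translate each abstract coefficient into the stated numerical one, tracking the direction of every inequality so that the crude rational bounds still certify the sharp hypotheses. For $(ii)$ the coefficient $q/(K_2\int_a^b\varPhi)=q/(1\cdot\tfrac16)=6q$ is exact, so condition $(ii)$ here is literally condition $(ii)$ of Theorem~\ref{T::5}. For $(i)$, since $m_1=\tfrac14$ and $\int_{a_1}^{b_1}\varPhi>43/2500$, the genuine threshold $1/(m_1\int_{a_1}^{b_1}\varPhi)$ is strictly below $10000/43$; hence requiring $f(t,u)\ge\tfrac{10000}{43}u$ on $[r,4r]$ forces $f(t,u)>u/(m_1\int_{a_1}^{b_1}\varPhi)$ there, the strict inequality holding at every point and in particular at $u=r$. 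Likewise for $(iii)$, from $K_1>47/125$ and $\int_{a_1}^{b_1}k_1\varPhi>539/100000$ one gets $K_1\int_{a_1}^{b_1}k_1\varPhi>\tfrac{47}{125}\cdot\tfrac{539}{100000}=\tfrac{25333}{12500000}$, so the true threshold $K_2/(K_1\int_{a_1}^{b_1}k_1\varPhi)$ lies below $12500000/25333$; the stated lower bound on $f$ over $[\tfrac{m_1}{K_2}p,p]=[\tfrac p4,p]$ then yields the strict inequality demanded in hypothesis $(iii)$ of Theorem~\ref{T::5}.

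Having matched all three hypotheses, I would invoke Theorem~\ref{T::5} to produce the two fixed points $u_1,u_2\in\mathcal P$ with the asserted bounds on $\|\cdot\|_\infty$ and on $\max_{t\in I_1}$, $\min_{t\in I_1}$; because $f\ge0$ on $I\times\R^+$ and every element of $\mathcal P$ satisfies $u(t)\ge\tfrac{k_1(t)}{K_2}\|u\|_\infty\ge0$ with $\|u_i\|_\infty>0$, these fixed points are genuine positive solutions of~\eqref{Ec::2ord}. The only delicate point is purely arithmetic: one must keep each inequality pointing the correct way so that replacing the transcendental constants by their rational under- or over-estimates weakens, rather than invalidates, the hypotheses. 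There is no analytic obstacle beyond this bookkeeping.
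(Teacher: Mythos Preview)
Your proposal is correct and matches the paper's approach: Theorem~\ref{T::5.7} is presented in the paper precisely as a rewriting of Theorem~\ref{T::5} with the numerical constants computed for $B=-2\pi$, with no separate proof given. Your explicit verification that the rational bounds $10000/43$ and $12500000/25333$ strictly dominate the true thresholds (thereby automatically supplying the strict inequalities required in hypotheses $(i)$ and $(iii)$ of Theorem~\ref{T::5}) is exactly the bookkeeping the paper leaves implicit.
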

	
	Let us consider the following continuous function
	\begin{equation}\label{F::1}f(t,u)=\begin{cases}
	\left(\frac{1007}{88}+\frac{225}{88}\,t\right) \dfrac{50000000}{1190651}u\,,&u\leqslant \dfrac{1}{28}\,,\\\\
		\left(\frac{1007}{88}+\frac{225}{88}\,t\right)\dfrac{3125000}{58341899\,u}\,,&\dfrac{1}{28}<u\leqslant 14\,,\\\\
	\left(\frac{1007}{88}+\frac{225}{88}\,t\right)\dfrac{3125000}{58341899\,u}+\dfrac{10000}{43}(u-14)\,u\,,& u>14	\,.
	\end{cases}\end{equation}
	
	It is easy to verify that this function $f$ satisfies the hypotheses of Theorem \ref{T::5.7}.
	
	\begin{enumerate}[$(i)$]
		\item  For the construction of $f$, $(i)$ is trivially fulfilled for all $u\geqslant 15$.
		\item If $q=\dfrac{7}{2}$, then $f(t,u)\leqslant f\left( t,\dfrac{1}{28}\right) =	\left(\frac{1007}{88}+\frac{225}{88}\,t\right)\dfrac{50000000}{1190651}\cdot\dfrac{1}{28}\leqslant \dfrac{25000000}{1190651}< \dfrac{7}{2}\cdot 6$ for all $t\in I $ and $u \in [0,14]$.
		\item If $p=\dfrac{1}{28}$, clearly $f(t,u)\geqslant\dfrac{47}{4}\cdot\dfrac{50000000}{1190651}u=\dfrac{12500000}{25333}\,u$, for all $u\leqslant p$ and $t\in I_1$ for all $t\in \left[ \dfrac{3}{25},1\right]$, in particular for $t\in I_1$ and $u\in\left[ \dfrac{1}{112},\dfrac{1}{28}\right]$.
	\end{enumerate}

\begin{remark}\label{R::2}
	Realize that, from Remarks \ref{R::I11} and \ref{R::I12}, the conditions imposed to $f$ are stronger than those imposed in Theorems \ref{T::5.3} and \ref{T::5B}, then the obtained $f$ is also valid for these results.
\end{remark}

Thus, we can conclude that for $B\in\{-2\,\pi,\log (\sqrt{5}-2),0,\log(\sqrt{5}+2)\}$ problem \eqref{Ec::2ord},   with $f$ defined in \eqref{F::1}, has at least two positive solutions, $u_1$ and $u_2$, such that $\dfrac{1}{28}<\|u_1\|_\infty$, $\max_{t\in I_1}u_1(t)<\dfrac{7}{2}<\max_{t\in I_1}u_2(t)$, $\min_{t\in I_1}u_2(t)<15$.
	\begin{theorem}\label{T::5.8}
		 Let $I=[0,1]$ and $I_1$ be defined in \eqref{Ec::I1_3}  and suppose $p$, $q$ and $r$ are positive numbers such that 
		\[0<p<q<4\,q\leqslant r\,,\]
		such the function $f$ satisfies the following conditions:
		\begin{enumerate}[$(a)$]
			\item $f(t,u)\leqslant 6\,r$ for all $t\in I$ and $u\in\left[ 0,r\right] $,
			\item $f(t,u)<6\,p$ for all $s\in I$ and $u\in\left[ 0,p\right] $,
			\item $f(t,u)\geqslant\dfrac{10000}{43}u$ for all $s\in I_1 $ and $u\in\left[ q,4\,q\right] $.
		\end{enumerate}
		
		Then,  for $B=-2\,\pi$, problem \eqref{Ec::2ord} has at least three positive solutions, $u_1$, $u_2$, $u_3\in \{u\in\mathcal P\ \mid\ \|u\|_\infty\leqslant r\}$, such that  $\max_{t\in I_1}u_1(t)<p$, $q<\min_{t\in I_1}u_2(t)$ and $p<\max_{t\in I_1}u_3(t)$ with $\min_{t\in I_1}u_3(t)<q$.
	\end{theorem}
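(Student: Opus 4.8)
The plan is to obtain Theorem \ref{T::5.8} as a direct specialization of Theorem \ref{T::6} to the concrete data already computed for $B=-2\pi$. First I would recall from the preceding discussion that, for $B=-2\pi$, the kernel $G(t,s)=-g(t,s)$ satisfies property $(Pg_1)$ with $\varPhi(s)=s(1-s)$, with $k_2(t)\leqslant 1$ so that $K_2=1$, and with $m_1=k_1(a_1)=\tfrac14$ for the interval $I_1$ fixed in \eqref{Ec::I1_3}; these facts have already been established (using the explicit lower bound constructed for $\tilde u(t,s)$, since $k_1$ has no closed form here), so I would simply invoke them rather than re-derive them. As the positive solutions of \eqref{Ec::2ord} coincide with the fixed points of $\mathcal L$ for this kernel, it then suffices to verify the three hypotheses of Theorem \ref{T::6} with $a=0$, $b=1$.

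Next I would match the admissibility chain. Because $K_2=1$ and $m_1=\tfrac14$, one has $\frac{K_2}{m_1}=4$, so the assumption $0<p<q<4\,q\leqslant r$ in the statement is exactly the requirement $0<p<q<\frac{K_2}{m_1}\,q\leqslant r$ of Theorem \ref{T::6}. For the growth bounds I would use $K_2\int_0^1\varPhi(s)\,ds=1\cdot\frac16=\frac16$, which turns $\frac{r}{K_2\int_0^1\varPhi}$ and $\frac{p}{K_2\int_0^1\varPhi}$ into $6\,r$ and $6\,p$ respectively; thus conditions $(a)$ and $(b)$ of the present statement are verbatim conditions $(a)$ and $(b)$ of Theorem \ref{T::6}.

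The only point needing a little care is condition $(c)$, where Theorem \ref{T::6} asks for $f(t,u)\geqslant\frac{u}{m_1\int_{a_1}^{b_1}\varPhi(s)\,ds}$ on $I_1\times[q,4q]$ with the inequality \emph{strict} at $u=q$, whereas the statement records only the non-strict bound $f(t,u)\geqslant\frac{10000}{43}\,u$. Here I would exploit the numerical estimate $\int_{a_1}^{b_1}\varPhi(s)\,ds>\frac{43}{2500}$ obtained above: it gives $\frac{1}{m_1\int_{a_1}^{b_1}\varPhi}<\frac{1}{\frac14\cdot\frac{43}{2500}}=\frac{10000}{43}$, so for every $u>0$ the bound $\frac{10000}{43}\,u$ strictly dominates $\frac{u}{m_1\int_{a_1}^{b_1}\varPhi}$. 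Consequently $f(t,u)\geqslant\frac{10000}{43}\,u$ already forces $f(t,u)>\frac{u}{m_1\int_{a_1}^{b_1}\varPhi}$ on $I_1$, which in particular supplies the strict inequality at $u=q$ demanded by Theorem \ref{T::6}$(c)$.

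With all three hypotheses verified, I would apply Theorem \ref{T::6} to obtain three fixed points $u_1,u_2,u_3\in\{u\in\mathcal P\mid\|u\|_\infty\leqslant r\}$ of $\mathcal L$, whose localization in terms of $\alpha(u)=\min_{t\in I_1}u(t)$ and $\beta(u)=\max_{t\in I_1}u(t)$ is precisely the one stated; these are exactly the three solutions of \eqref{Ec::2ord} for $B=-2\pi$, and their positivity is automatic from membership in the cone $\mathcal P$, since $u(t)\geqslant\frac{k_1(t)}{K_2}\|u\|_\infty\geqslant 0$. I expect the main obstacle not to be the logical structure, which is a routine translation, but the verification of the quantitative inputs for this awkward value $B=-2\pi$: namely confirming that the tabulated rational bounds $K_2=1$, $m_1=\frac14$, $\int_0^1\varPhi=\frac16$ and $\int_{a_1}^{b_1}\varPhi>\frac{43}{2500}$ are correct and mutually consistent, since for $B\notin[-2,2]$ one cannot rely on a closed form for $k_1$ and must instead justify them through the explicit estimate for $\tilde u(t,s)$ established earlier.
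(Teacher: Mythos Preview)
Your proposal is correct and follows exactly the approach the paper takes: Theorem~\ref{T::5.8} is presented there without a separate proof, simply as the specialization of Theorem~\ref{T::6} obtained by substituting the constants $K_2=1$, $m_1=\tfrac14$, $\int_0^1\varPhi=\tfrac16$ and the numerical lower bound $\int_{a_1}^{b_1}\varPhi>\tfrac{43}{2500}$ computed for $B=-2\pi$. Your treatment of condition~$(c)$ --- observing that the strict inequality $\tfrac{10000}{43}>\bigl(m_1\int_{a_1}^{b_1}\varPhi\bigr)^{-1}$ automatically upgrades the stated non-strict bound on $f$ to the strict inequality at $u=q$ required by Theorem~\ref{T::6}$(c)$ --- is a point the paper leaves implicit, and it is good that you made it explicit.
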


Let us consider the following continuous function
\begin{equation}\label{F::2} f(t,u)=\begin{cases}
12\,\left( \frac{31}{28}t+\frac{25}{28}\right) \,u^3\,,&u\leqslant 16\,,\\\\
49152\left( \frac{31}{28}t+\frac{25}{28}\right)\,,&u>16\,.
\end{cases}\end{equation}

It is easy to verify that $f$ fulfills the hypotheses of Theorem \ref{T::5.8}:
\begin{enumerate}[$(a)$]
	\item If $r=16384$, then $f(t,u)\leqslant 98304=6\cdot16384$ for all $t\in I$ and $u\in [0,16384]$,
	\item If $p=\dfrac{1}{2}$, then $f(t,u)\leqslant12\cdot 2\cdot \dfrac{1}{4}\cdot{1}{2}=6\cdot\dfrac{1}{2}$, for all $t\in I$ and $u\in\left[ 0,\dfrac{1}{2}\right]$, 
	\item If $q=4$, then $f(t,u)\geqslant 12\cdot\dfrac{5}{4}\,16\, u=240\,u>\dfrac{10000}{43}\,u$ for all $t\in \left[ \dfrac{3}{25},1\right] $, in particular for $t\in I_1$ and $u\in[4,16]$.
\end{enumerate}

As in Remark \ref{R::2}, the imposed conditions on $f$ are stronger than those in Theorems \ref{T::5.3} and \ref{T::6B}. Thus, we can conclude that for $B\in\{-2\,\pi,\log (\sqrt{5}-2),0,\log(\sqrt{5}+2)\}$ problem \eqref{Ec::2ord},   with $f$ defined in \eqref{F::2}, has at least three positive solutions, $u_1$, $u_2$, $u_3\in \{u\in\mathcal P\ \mid\ \|u\|_\infty\leqslant 16384\}$, such that  $\max_{t\in I_1}u_1(t)<\dfrac{1}{2}$, $4<\min_{t\in I_1}u_2(t)$ and $\dfrac{1}{2}<\max_{t\in I_1}u_3(t)$ with $\min_{t\in I_1}u_3(t)<4$.
	\begin{itemize}
	\item Fourth order: $n=4$ and $k=2$.
\end{itemize}

In \cite{CabSaa} it is proved that the related Green's function of operator $T_4[M]\,u(t)=u^{(4)}(t)+M\,u(t)$ on $X_2$,  with $I=[0,1]$, satisfies the property $(Pg_1)$ if, and only if, $M\in (-\lambda_1^4,\lambda_2^4]$, where $\lambda_1\approxeq 4.73$ is the least positive solution of
\[\cos (\lambda ) \cosh (\lambda )=1\,,\]
and $\lambda_2\approxeq 5.55$ is the least positive solution of
\[\tan\left( \dfrac{\lambda}{\sqrt{2}}\right) =\tanh\left( \dfrac{\lambda}{\sqrt{2}}\right)\,.\]

 In particular, it is fulfilled for $M=0$. Thus, let us study the following problem:
\begin{equation}\label{Ec::Ex1}
u^{(4)}(t)=f(t,u(t))\,,\quad t\in[0,1]\,,\quad
u(0)=u'(0)=u(1)=u'(1)=0\,.
\end{equation}

The related Green's function (see \cite{CaCiMa}) is given by
\[g(t,s)=\begin{cases}
\dfrac{s^2}{6}\,(1-t)^2\,(3\,t-s-2\,s\,t)\,,&0\leqslant s\leqslant t\leqslant 1\,,\\\\
\dfrac{(1-s)^2}{6}\,t^2\,(3\,s-t-2\,s\,t)\,,&0<t<s\leqslant 1\,.
\end{cases}\]

Thus, after tedious calculus, we can ensure that
%
%
%
\[k_1(t)\begin{cases}
\dfrac{t^2}{2}\,(1-t)\,,&0\leqslant t\leqslant\dfrac{1}{2}\,,\\\\
\dfrac{t}{2}\,(1-t)^2,&\dfrac{1}{2}<t\leqslant 1\,,
\end{cases}\]
and
\[k_2(t)=\begin{cases}
\dfrac{t}{2}\,(1-t)^2\,,&0\leqslant t\leqslant\dfrac{1}{4}\,,\\\\
\dfrac{1-t}{24}\,(1+2\,t)^2,&\dfrac{1}{4}<t\leqslant\dfrac{1}{2}\,,\\\\
\dfrac{t}{24}\,(3-2\,t)^2\,,&\dfrac{1}{2}< t\leqslant\dfrac{3}{4}\,,\\\\
\dfrac{t^2}{2}\,(1-t),&\dfrac{3}{4}<t\leqslant 1\,.
\end{cases}\]

Hence, we have \[K_2=\max_{t\in[0,1]}k_2(t)=k_2\left( \dfrac{1}{2}\right) =\dfrac{1}{12} \ \text{ and } K_1=\max_{t\in[0,1]}k_1(t)=k_1\left( \dfrac{1}{2}\right) =\dfrac{1}{16}\,.\]

As in the second order case, in Figure \ref{Fig::3}, we represent the function $\tilde{u}(t,s)$, bounded from above by $k_2(t)$ and from below by $k_1(t)$.  Moreover, in Figure \ref{Fig::4}, it is shown the same representation considering the constant values $t_0=\frac{3}{4}$ and $s_0=\frac{3}{4}$, respectively.

\begin{figure}[h]
	\centering
	\includegraphics[width=0.7\textwidth]{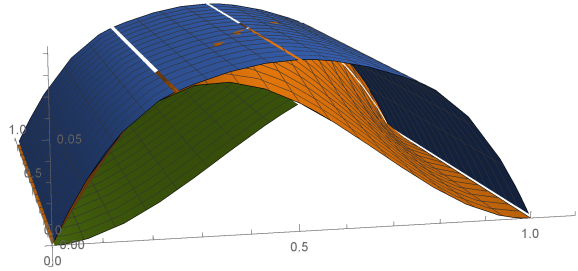}
	\caption{\scriptsize$\tilde u(t,s)$ (orange) bounded from above by $k_2(t)$ (blue) and from below by $k_1(t)$ (green).\label{Fig::3}}
\end{figure}

\begin{figure}[h]
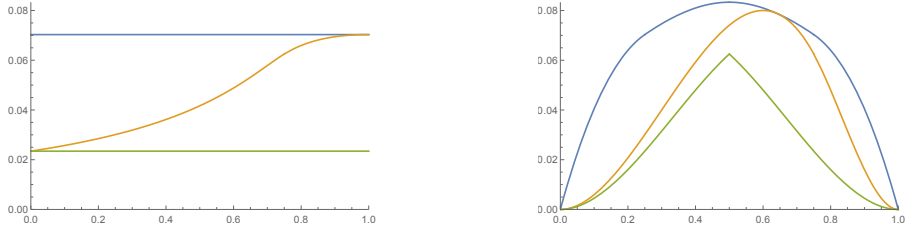

	\centering
	\includegraphics[width=0.4\textwidth]{B0Gtcteord4} \hspace{2cm}\includegraphics[width=0.4\textwidth]{B0Gscteord4} 
	\caption{\scriptsize Figure \ref{Fig::3} for $t_0=\frac{3}{4}$ on the left and for $s_0=\frac{3}{4}$ on the right.\label{Fig::4}}
\end{figure}

Let us choose $I_1=[a_1,b_1]=\left[ \dfrac{1}{3},\dfrac{2}{3}\right]$, in this case we have $m_1=\min_{t\in I_1}k_1(t)=k_1\left( \dfrac{1}{3}\right) =\dfrac{1}{27}$ and
\[\begin{split}
\int_0^1\varPhi(s)\,ds&=\dfrac{1}{30}\,,\\
\int_{a_1}^{b_1}\varPhi(s)\,ds&=\dfrac{47}{2430}\,,\\
\int_{a_1}^{b_1}k_1(s)\,\varPhi(s)\,ds&=\dfrac{462461}{470292480}\,.\end{split}\]

Hence, as in the  second order case, we can obtain the correspondent $(H_1)$ and $(H_2)$:
	\begin{itemize}
	\item[$(H_1)$] There exists $p>0$ such that $f(t,u)\leqslant 360\,p$ for all $t\in [0,1]$ and $u\in[0,p]$.		
	\item[$(H_2)$] There exists $q>0$ such that $f(t,u)\geqslant \dfrac{627056640\,u}{426461}$ for all $t\in\left[\dfrac{1}{3},\dfrac{2}{3}\right] $ and $u\in\left[ \dfrac{4}{9}q,q\right] $.
\end{itemize}

Finally, we can rewrite Theorems \ref{T::5} and \ref{T::6} as follows.

\begin{theorem}
	Suppose that there exist positive numbers $p$, $q$ and $r$ such that $0<p<q<r$, and suppose that function $f$ satisfies the following conditions:
	\begin{enumerate}[$(i)$]
		\item $f(t,u)\geqslant\dfrac{65610}{47}u$ for all $t\in\left[\dfrac{1}{3},\dfrac{2}{3}\right]  $ and $u\in\left[ r,\dfrac{9}{4}r\right] $, being the inequality strict for $u=r$,
		\item $f(t,u)<360\,q$ for all $t\in[0,1]$ and $u\in\left[ 0,\dfrac{9}{4}q\right] $, being the inequality strict for $u=q$,
		\item $f(t,u)>\dfrac{627056640}{426461}u$ for all $t\in\left[\dfrac{1}{3},\dfrac{2}{3}\right]  $ and $u\in\left[ \dfrac{4}{9}p,p\right] $.
	\end{enumerate}
	
	Then  problem \eqref{Ec::Ex1} has at least two positive solutions, $u_1$ and $u_2$, such that $p<\|u_1\|_\infty$, $\max_{t\in I_1}u_1(t)<q<\max_{t\in I_1}u_2(t)$ and $\min_{t\in I_1}u_2(t)<r$.
\end{theorem}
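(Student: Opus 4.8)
The plan is to obtain this statement as the direct specialization of Theorem \ref{T::5} to the clamped-beam problem \eqref{Ec::Ex1}, for which $n=4$, $k=2$, $I=[0,1]$ and $M=0$. First I would record that, since $(-1)^{n-k}=1$, the operator $\mathcal L$ coincides with $u\mapsto\int_0^1 g(t,s)\,f(s,u(s))\,ds$, and that $g(t,s)$ satisfies $(Pg_1)$: this is already guaranteed because $M=0$ lies in the admissible interval $(-\lambda_1^4,\lambda_2^4]$ singled out for this problem (equivalently, by Theorem \ref{T::7} applied with the disconjugacy of $T_4[0]$). Consequently the fixed points of $\mathcal L$ are exactly the solutions of \eqref{Ec::Ex1}, and membership in the cone $\mathcal P$ forces them to be nonnegative, hence positive on $(0,1)$.

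Next I would simply substitute the explicit constants already computed for this case, namely $K_1=\tfrac1{16}$, $K_2=\tfrac1{12}$, $m_1=\tfrac1{27}$, together with $\int_0^1\varPhi=\tfrac1{30}$, $\int_{a_1}^{b_1}\varPhi=\tfrac{47}{2430}$ and $\int_{a_1}^{b_1}k_1\,\varPhi=\tfrac{426461}{470292480}$ (with $I_1=[\tfrac13,\tfrac23]$), into hypotheses $(i)$--$(iii)$ of Theorem \ref{T::5}. The point is that each abstract coefficient collapses to the numerical one appearing in the statement:
\[
\frac{K_2}{m_1}=\frac{9}{4},\qquad
\frac{1}{m_1\int_{a_1}^{b_1}\varPhi}=\frac{65610}{47},\qquad
\frac{1}{K_2\int_0^1\varPhi}=360,\qquad
\frac{K_2}{K_1\int_{a_1}^{b_1}k_1\,\varPhi}=\frac{627056640}{426461}.
\]
The first identity shows that the intervals $[r,\tfrac94 r]$, $[0,\tfrac94 q]$ and $[\tfrac49 p,p]$ in $(i)$, $(ii)$, $(iii)$ are precisely $[r,\tfrac{K_2}{m_1}r]$, $[0,\tfrac{K_2}{m_1}q]$ and $[\tfrac{m_1}{K_2}p,p]$; the remaining three identities match the right-hand coefficients. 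Thus the three displayed hypotheses are literally conditions $(i)$--$(iii)$ of Theorem \ref{T::5}, and its conclusion yields two fixed points $u_1,u_2\in\mathcal P$ with $p<\|u_1\|_\infty$, $\max_{t\in I_1}u_1(t)<q<\max_{t\in I_1}u_2(t)$ and $\min_{t\in I_1}u_2(t)<r$, which are the asserted positive solutions.

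The only genuinely laborious step is the verification of the integral $\int_{a_1}^{b_1}k_1(s)\,\varPhi(s)\,ds$, since on $I_1=[\tfrac13,\tfrac23]$ the function $k_1$ is piecewise defined, equal to $\tfrac{s^2}{2}(1-s)$ on $[\tfrac13,\tfrac12]$ and to $\tfrac{s}{2}(1-s)^2$ on $[\tfrac12,\tfrac23]$, while $\varPhi(s)=s^2(1-s)^2$; the integrand is therefore a polynomial and the computation, though elementary, must be split at $s=\tfrac12$ and carried out exactly to produce the fraction $\tfrac{426461}{470292480}$. I expect this bookkeeping, rather than any conceptual difficulty, to be the main obstacle, the rest of the argument being an immediate instantiation of Theorem \ref{T::5}.
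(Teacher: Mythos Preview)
Your approach is correct and is exactly what the paper does: the theorem is stated without separate proof, as the literal specialization of Theorem~\ref{T::5} to the data $K_1=\tfrac1{16}$, $K_2=\tfrac1{12}$, $m_1=\tfrac1{27}$, $I_1=[\tfrac13,\tfrac23]$ and the three integrals you list, once $(Pg_1)$ is known to hold for $M=0$. One cosmetic point: the paper records $\int_{a_1}^{b_1}k_1\varPhi=\tfrac{462461}{470292480}$ (not $426461$), so the denominator $426461$ in the statement is a typographical slip in the paper itself; your computation should reproduce $462461$ when you split the integral at $s=\tfrac12$.
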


Let us choose a particular function $f$ that the satisfies the conditions imposed in previous result.
\[f(t,u)=\begin{cases}
1296\,t\,,&u\leqslant \dfrac{1}{36}\,,\\\\
\dfrac{t}{u^2}\,,&\dfrac{1}{36}<u\leqslant \dfrac{33}{4}\,,\\\\
\dfrac{64\,t}{35937}\left( u-\dfrac{29}{4}\right) ^5u\,,&u>\dfrac{33}{4}\,.
\end{cases}\]

Let us choose $p=\dfrac{1}{16}$, $q=\dfrac{11}{3}$ and $r=27$, we have:
	\begin{enumerate}[$(i)$]
	\item For all $t\in \left[\dfrac{1}{3},\dfrac{2}{3}\right]  $ and $u\in\left[ 27,\dfrac{243}{4}\right] $, $f(t,u)=\dfrac{64\,t}{35937}\left( u-\dfrac{29}{4}\right)^5u\geqslant\dfrac{64}{3\,\cdot\,35937}\,\left( \dfrac{79}{4}\right) ^5\,u=\dfrac{3077056399\,u}{1724976}>\dfrac{65610\,u}{47}$.
	\item For all $t\in[0,1]$ and $u\in\left[ 0,\dfrac{33}{4}\right] $, we have $f(t,u)\leqslant \dfrac{1}{\left( 1/36\right) ^2}=1296<\dfrac{11}{3}\,\cdot\,360$. 
	\item For all $t\in\left[\dfrac{1}{3},\dfrac{2}{3}\right]  $ and $u\in\left[ \dfrac{1}{36},\dfrac{1}{16}\right] $, $f(t,u)=\dfrac{t}{u^3}\,u\geqslant \dfrac{1/3}{(1/16)^3}\,u=\dfrac{4096}{3}\,u>\dfrac{627056640\,u}{462461}$.
	\end{enumerate}

	Hence, for this function the problem \eqref{Ec::Ex1} has at least two positive solutions, $u_1$ and $u_2$, such that $\dfrac{1}{16}<\|u_1\|_\infty$, $\max_{t\in I_1}u_1(t)<\dfrac{11}{3}<\max_{t\in I_1}u_2(t)$ and $\min_{t\in I_1}u_2(t)<27$.
	
	Now, Theorem \ref{T::6} reads as follows:
\begin{theorem}\label{T::6-4}
	Suppose $p$, $q$ and $r$ are positive numbers for which the following inequalities are fulfilled:
	\[0<p<q<\dfrac{9}{4}q\leqslant r\,,\]
	and that the function $f$ satisfies the following conditions:
	\begin{enumerate}[$(a)$]
		\item $f(t,u)\leqslant 360\,r$ for all $t\in [0,1]$ and $u\in\left[ 0,r\right] $,
		\item $f(t,u)<360\,p$ for all $s\in [0,1]$ and $u\in\left[ 0,p\right] $,
		\item $f(t,u)\geqslant\dfrac{65610}{47}u$ for all $s\in\left[\dfrac{1}{3},\dfrac{2}{3}\right]  $ and $u\in\left[ q,\dfrac{9}{4}q\right] $, being the inequality strict for $u=q$.
	\end{enumerate}
	
	Then problem \eqref{Ec::Ex1} has at least three positive solutions, $u_1$, $u_2$, $u_3\in \{u\in\mathcal P\ \mid\ \|u\|_\infty\leqslant r\}$, such that  $\max_{t\in I_1}u_1(t)<p$, $q<\min_{t\in I_1}u_2(t)$ and $p<\max_{t\in I_1}u_3(t)$ with $\min_{t\in I_1}u_3(t)<q$.
\end{theorem}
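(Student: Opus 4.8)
The plan is to obtain this result as a direct specialization of Theorem~\ref{T::6} to the fourth order problem \eqref{Ec::Ex1}. Recall that the solutions of \eqref{Ec::Ex1} coincide with the fixed points of the integral operator $\mathcal L$ of \eqref{Ec::OpInt} whose kernel is the Green's function $g(t,s)$ of $T_4[0]$ on $X_2$. Since this Green's function was shown above to satisfy property $(Pg_1)$ for $M=0$, the hypothesis on the kernel required by Theorem~\ref{T::6} is already in force, and it remains only to translate the abstract constants appearing there into the explicit values computed for this problem.

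First I would substitute the constants obtained above, namely $K_2=\max_{t\in I}k_2(t)=\tfrac{1}{12}$, $K_1=\tfrac{1}{16}$, the choice $I_1=\left[\tfrac13,\tfrac23\right]$ giving $m_1=k_1\!\left(\tfrac13\right)=\tfrac{1}{27}$, together with the integral values $\int_0^1\varPhi(s)\,ds=\tfrac{1}{30}$, $\int_{a_1}^{b_1}\varPhi(s)\,ds=\tfrac{47}{2430}$ and $\int_{a_1}^{b_1}k_1(s)\,\varPhi(s)\,ds=\tfrac{462461}{470292480}$. The decisive numerical check is that $\tfrac{K_2}{m_1}=\tfrac{1/12}{1/27}=\tfrac{9}{4}$, so the chain $0<p<q<\tfrac{K_2}{m_1}\,q\leqslant r$ demanded by Theorem~\ref{T::6} becomes exactly the hypothesis $0<p<q<\tfrac{9}{4}\,q\leqslant r$ of the present statement.

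Next I would verify that the three analytic conditions of Theorem~\ref{T::6} collapse to conditions $(a)$--$(c)$ here. For $(a)$ and $(b)$ the governing bound is $\tfrac{1}{K_2\int_a^b\varPhi(s)\,ds}=\tfrac{1}{(1/12)(1/30)}=360$, so $f(t,u)\leqslant 360\,r$ and $f(t,u)<360\,p$ are precisely the required inequalities; for $(c)$ the coefficient is $\tfrac{1}{m_1\int_{a_1}^{b_1}\varPhi(s)\,ds}=\tfrac{27\cdot 2430}{47}=\tfrac{65610}{47}$, which matches the stated lower bound on $\left[q,\tfrac{9}{4}q\right]$, strict at $u=q$. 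With all hypotheses of Theorem~\ref{T::6} thereby met, that theorem furnishes three fixed points $u_1,u_2,u_3\in\{u\in\mathcal P\mid\|u\|_\infty\leqslant r\}$ with the asserted separation in terms of $\max_{t\in I_1}$ and $\min_{t\in I_1}$, and these are the three positive solutions of \eqref{Ec::Ex1}.

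Since every step is a substitution into an already-proved theorem, there is no genuine analytic obstacle. The only point demanding care is the arithmetic confirming that the abstract quantities $K_2/m_1$ and the two integral coefficients reduce to $9/4$, $360$, and $65610/47$ respectively, and that $I_1=\left[\tfrac13,\tfrac23\right]\subset I$ indeed satisfies $k_1>0$ on it, so that $m_1>0$ as required in the cone construction underlying Theorem~\ref{T::6}.
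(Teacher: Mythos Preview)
Your proposal is correct and matches the paper's approach exactly: the paper introduces Theorem~\ref{T::6-4} with the phrase ``Now, Theorem~\ref{T::6} reads as follows,'' treating it purely as a specialization obtained by substituting the previously computed constants $K_2=\tfrac{1}{12}$, $m_1=\tfrac{1}{27}$, $\int_0^1\varPhi=\tfrac{1}{30}$ and $\int_{a_1}^{b_1}\varPhi=\tfrac{47}{2430}$ into Theorem~\ref{T::6}. Your arithmetic verifications of $K_2/m_1=\tfrac94$, $1/(K_2\int\varPhi)=360$ and $1/(m_1\int_{I_1}\varPhi)=\tfrac{65610}{47}$ are precisely what is needed.
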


Let us see that the following function satisfies these hypotheses:
\[f(t,u)=\begin{cases}
(2+3\,t)\,u^2\,,&u\leqslant \dfrac{1}{2}\,,\\\\
\left( u-\dfrac{1}{2}\right) u^4+(2+3\,t)\,u^2\,,&\dfrac{1}{2}<u\leqslant 14\,,\\\\
519204+588\,t\,,&u>14\,.
\end{cases}\]

Let us choose $p=\dfrac{1}{2}$, $q=\dfrac{56}{9}$ and $r=1444$, we can check all the hypotheses of Theorem \ref{T::6-4}.
	\begin{enumerate}[$(a)$]
	\item  For all $t\in [0,1]$ and $u\in\left[ 0,1444\right] $, $f(t,u)\leqslant 519204+588=519792<360\,\cdot\,1444=519840\,r$.
	\item For all $s\in [0,1]$ and $u\in\left[ 0,\dfrac{1}{2}\right] $, then $f(t,u)=(2+3\,t)\,u^2\leqslant \dfrac{5}{4}<360\,\cdot\,\dfrac{1}{2}=180$.
	\item For all $s\in \left[\dfrac{1}{3},\dfrac{2}{3}\right] $ and $u\in\left[ \dfrac{56}{9},14\right] $, $f(t,u)=\left( u-\dfrac{1}{2}\right) u^4+(2+3\,t)\,u^2=\left( \left( u-\dfrac{1}{2}\right) u^3+(2+3\,t)\,u\right) \,u\geqslant\left( \left( \dfrac{56}{9}-\dfrac{1}{2}\right) \left( \dfrac{56}{9}\right) ^3+3\,\dfrac{56}{9}\right) =\dfrac{9166696\,u}{6561}>\dfrac{65610\,u}{47}$.
\end{enumerate}

Hence, for such a $f$ the problem \eqref{Ec::Ex1} has at least three positive solutions, $u_1$, $u_2$ and $u_3$, such that $\max_{t\in I_1}u_1(t)<\dfrac{1}{2}$, $\dfrac{56}{9}<\min_{t\in I_1}u_2(t)$ and $\dfrac{1}{2}<\max_{t\in I_1}u_3(t)$ with $\min_{t\in I_1}u_3(t)<\dfrac{56}{9}$.

	\end{document}